\numberwithin{equation}{section}
\newtheorem{theorem}{Theorem}[section]
\newtheorem{lemma}{Lemma}[section]
\newtheorem{remark}{Remark}[section]
\def\XXint#1#2#3{{\setbox0=\hbox{$#1{#2#3}{\int}$}
     \vcenter{\hbox{$#2#3$}}\kern-.5\wd0}}
\begin{document}

\title{Harnack's inequality for   degenerate   double phase parabolic equations under the non-logarithmic Zhikov's condition}

\author{Mariia Savchenko, Igor Skrypnik, Yevgeniia Yevgenieva}



\date{}

\maketitle

\begin{abstract}
We prove Harnack's type inequalities for bounded non-negative solutions of  degenerate parabolic equations with $(p,q)$ growth
$$
u_{t}-{\rm div}\left(\mid \nabla u \mid^{p-2}\nabla u + a(x,t) \mid \nabla u \mid^{q-2}\nabla u \right)=0,\quad a(x,t) \geq 0 ,
$$
under the generalized non-logarithmic Zhikovs conditions
$$ \mid a(x,t)-a(y,\tau)\mid \leqslant A\mu(r) r^{q-p},\quad (x,t),(y,\tau)\in Q_{r,r}(x_{0},t_{0}),$$
$$\lim\limits_{r\rightarrow 0}\mu(r) r^{q-p}=0,\quad \lim\limits_{r\rightarrow 0}\mu(r)=+\infty,\quad \int\limits_{0} \mu^{-\beta}(r)\frac{dr}{r} =+\infty,$$
\noindent with some ~$\beta >0$.
\end{abstract}

\textbf{MSC (2010)}: 35B40, 35B45, 35J62, 35K59.

\textbf{Keywords:} A priori estimates,  Degenerate double phase parabolic equations, Generalized Orlicz growth, Harnack's inequality.

\section{Introduction and main results}\label{Introduction}
In this paper we are concerned with a class of parabolic equations with nonstandard growth conditions.
Let $\Omega$ be a domain in $\mathbb{R}^{n},~T>0,~\Omega_{T}:= \Omega \times (0, T).$ We study bounded solutions to the equation
\begin{equation}\label{eq1.1}
u_{t}-\textrm{div}\mathbb{A}(x, t, \nabla u)=0,~(x, t)\in \Omega_{T}.
\end{equation}
We suppose that the functions $\mathbb{A} :\Omega_{T}\times  \mathbb{R}^{n} \rightarrow \mathbb{R}^{n}$ are such that $\mathbb{A}(\cdot, \cdot,  \xi)$ are Lebesgue measurable for all $ \xi \in \mathbb{R}^{n},$ and $\mathbb{A}(x, t, \cdot)$ are continuous for almost all $(x, t)\in \Omega_{T}.$ We also assume that the following structure conditions are satisfied:
\begin{equation}\label{eq1.2}
\begin{aligned}
  \mathbb{A}(x, t, \xi)\xi &\geqslant K_{1} \big( |\xi|^{p} +a(x,t)|\xi|^{q}\big),
   \\
   \mid \mathbb{A}(x, t, \xi)\mid &\leqslant K_{2}\big( |\xi|^{p-1} +a(x,t) |\xi|^{q-1}\big),
\end{aligned}
\end{equation}
where $K_{1}, K_{2}$ are positive constants and $p<q$.

Fix point $(x_{0}, t_{0}) \negthickspace\in\negthickspace \Omega_{T}$ and set $Q_{R_{1},R_{2}}(x_{0}, t_{0})\negthickspace:= \negthickspace Q^{-}_{R_{1},R_{2}}(x_{0}, t_{0}) \cup  Q^{+}_{R_{1},R_{2}}(x_{0}, t_{0}), Q^{-}_{R_{1},R_{2}}(x_{0}, t_{0})\negthickspace:= B_{R_{1}}(x_{0})\times (t_{0}-R_{2}, t_{0}), Q^{+}_{R_{1},R_{2}}(x_{0}, t_{0}):= B_{R_{1}}(x_{0})\times (t_{0}, t_{0}+R_{2}),  R_{1}, R_{2} >0.$

We assume that there exists positive continuous non-increasing function $\mu(r)\geqslant 1$
on the interval $(0,1) ,\quad  \lim\limits_{r \rightarrow 0}\mu(r) r^{1- \bar {b}} =0$ with some $\bar{b}\in (0,1)$ such that
\begin{equation}\label{eq1.3}
\begin{aligned}
 \mid a(x,t) \negthickspace-\negthickspace a(y,\tau) \mid \leq\negthickspace A \mu(r) r^{q-p},\,\, (x,t),(y,\tau) \in Q_{r,r}(x_{0},t_{0}) \subset \Omega_{T} ,
\end{aligned}
\end{equation}
with some $A>0$.
\begin{remark}\label{rem1.1}
Setting $\varPhi(x,t, v):= v^{p}+a(x,t) v^{q}$, $v>0$, we note (see e.g. \cite{SkrVoitPrepr2021}) that \eqref{eq1.3} yields the following $(\varPhi_{\lambda})$ and $(\varPhi_{\mu})$ conditions:
\begin{itemize}
\item [$(\varPhi_{\lambda})$]
there exists $\bar{K}>0$ depending only on $A$ such that for any $K>0$ there holds
$$\varPhi^{+}_{Q_{r,r}(x_{0},t_{0})}\bigg(\frac{v}{r}\bigg)\leqslant \bar{K}(1+K^{q-p}) \varPhi^{-}_{Q_{r,r}(x_{0},t_{0})}\bigg(\frac{v}{r}\bigg),\quad r<v\leqslant K\lambda(r), $$
\end{itemize}
where $\lambda(r)=[\mu(r)]^{-\frac{1}{q-p}},$
and
\begin{itemize}
\item[$(\varPhi_{\mu})$]
there exists $\bar{K}>0$ depending only on $A$ such that for any $K>0$ there holds
$$\varPhi^{+}_{Q_{r,r}(x_{0},t_{0})}\bigg(\frac{v}{r}\bigg)\leqslant \bar{K}(1+K^{q-p}) \mu(r) \varPhi^{-}_{Q_{r,r}(x_{0},t_{0})}\bigg(\frac{v}{r}\bigg),\quad r<v\leqslant K,$$
\end{itemize}
here  $\varPhi^{+}_{Q_{r,r}(x_{0},t_{0})}(v):=\max\limits_{(x,t)\in Q_{r,r}(x_{0},t_{0})}\varPhi(x,t, v)$,
\hspace*{8mm}$\varPhi^{-}_{Q_{r,r}(x_{0},t_{0})}(v):=\min\limits_{(x,t)\in Q_{r,r}(x_{0},t_{0})}\varPhi(x,t, v)$.
\end{remark}

In addition, we assume that the equation \eqref{eq1.1} is  degenerate at the point $(x_{0}, t_{0})$ which means that there exists $K_{3}$, $R_{0} >0$ such that the function
\begin{equation}\label{eq1.4}
\psi(x_{0}, t_{0} , v):= v^{p-2} + a(x_{0},t_{0}) v^{q-2} \,\, \text{is non-decreasing for}\,\, v\geqslant \dfrac{K_{3}}{R_{0}}.
\end{equation}
Particularly, this condition is valid if $p>2$ or $p\leqslant 2 < q$ and $a(x_{0}, t_{0}) >0$ (see \cite{SkrVoitNA20, SkrVoitPrepr2021}). In the case  $p=q>2$, these equations are classified as  degenerate because the diffusion term depends degenerately on the  gradient $\nabla u$.

Similarly, if we assume that $\psi(x_{0}, t_{0} , v)$ is non-increasing for $ v\geqslant \dfrac{K_{3}}{R_{0}}$ then equation \eqref{eq1.1} is singular at the point $(x_{0},t_{0})$.
 This condition is valid if $q<2$ or $p< 2 \leqslant q$ and $a(x_{0}, t_{0})=0$. This case will not be considered in this paper  we refer the reader to \cite{Skr} for the Harnack's inequality in the case $q<2$.

We\,\, will\,\, establish\,\, that\,\, non-negative \,bounded \,\,weak\,solutions \,of Eq.  \eqref{eq1.1} satisfy an intrinsic form of the Harnack's inequality in a neighborhood of $(x_{0}, t_{0})$. This property is basically characterized by the different types of degenerate behavior, according to the size of a coefficient $a(x, t)$ that determines the  phase. Indeed, on the set $\{a(x, t) = 0\}$ equation \eqref{eq1.1} has the growth of order $p$ with respect to the gradient $($this is so-called $p$-phase$)$, and at the same time this growth is of order $q$ if $a(x, t) > 0$ $($this corresponds to  $(p, q)$-phase$)$.

Before describing the main results, a few words concerning the history of the problem. The study of regularity of minima of functionals with non-standard growth has been initiated by Kolodij \cite{KolodijEll., KolodijPar.}, Zhikov \cite{ZhikIzv1983, ZhikIzv1986, ZhikJMathPh94, ZhikJMathPh9798, ZhikKozlOlein94}, Marcellini \cite{Marcellini1989, Marcellini1991} and Lieberman \cite{Lieberman91}, and in the last thirty years there has been growing interest and substantial development in the qualitative theory of second-order quasilinear elliptic and parabolic equations with so-called "log-conditions" (i.e. if $\mu(r)=1$). We refer the reader to the papers \cite{AlhutovKrash08,ZhikAlkhTSP11, AntZhikov2005, BarBog2014, BarColMing, BarColMingStPt16, BarColMingCalc.Var.18, BogDuzaar2012, BurchSkrPotAn, ColMing218, ColMing15, ColMingJFnctAn16, DienHarHastRuzVarEpn, DingZhangZhou2020, HarHastOrlicz, HarHastLeNuorNA2010, HarHasLee, HwangLieberman287, HwangLieberman288, Sur, WangNA2013, WinkZach2016, XuChen2006, Yao2014, Yao2015, ZhZhouXueNonAn2014,Yev,ShishYev,SkrVoit2021,BonSkr,ShSkr,LisSkr}  for the basic results, historical surveys and references.

The case when the condition \eqref{eq1.3} holds differs substantially from the logarithmic case. To our knowledge, there are a few results in this direction. Zhikov \cite{ZhikPOMI04} obtained a generalization of the logarithmic condition which guarantees the density of smooth functions in Sobolev space $W^{1, p(x)}(\Omega).$ Particularly, this result holds if $p(x) \geqslant p>1$ and
\begin{multline}\label{eq1.5}
\mid p(x)-p(y) \mid \leqslant  \frac{\log\mu(|x-y|)}{\mid \log \mid x-y \mid \mid},\quad x, y \in \Omega,~x \neq y, \\
\text{and}\quad \int\limits_{0}[\mu(r)]^{-\frac{n}{p}} \frac{dr}{r} = +\infty.
\end{multline}
 We note that the function $\mu(r)=\big[\log\dfrac{1}{r}\big]^{L},\quad 0\leqslant L \leqslant \dfrac{p}{n}$ satisfies the above condition.

 Interior continuity, continuity up to the boundary and Harnack`s inequality to the $p(x)-$Laplace equation were proved in \cite{AlhutovKrash08},  \cite{AlkhSurnAlgAn19} and  \cite{SurnPrepr2018} under the condition
\begin{equation}\label{eq1.6}
 \int\limits_{0}\,e^{-\gamma [\mu(r)]^{c}} \frac{dr}{r}=+\infty
\end{equation}
with some $\gamma$, $c>1$. Particularly, the function $\mu (r)=\big[\log \log \dfrac{1}{r}\big]^{L},$ $0<L<\dfrac{1}{c},$ satisfies the above condition.

These results were generalized in \cite{ShSkrVoit20, SkrVoitNA20} for a wide class of elliptic and parabolic equations with non-logarithmic Orlicz growth. Later, for elliptic and parabolic equations, the results from \cite{ShSkrVoit20, SkrVoitNA20} were substantially refined in \cite{HadzhySkrVoit, Skr, SkrVoitPrepr2021, SkrYev}. Interior continuity for double phase elliptic and parabolic equations  instead of condition \eqref{eq1.6} was proved under the condition
\begin{equation}\label{eq1.7}
\int\limits_{0} [\mu(r)]^{-\frac{1}{q-p}}\frac{dr}{r}=+\infty.
\end{equation}
In addition, in \cite{HadzhySkrVoit, Skr} Harnack's inequality was proved for quasilinear elliptic and singular ($q<2$) parabolic equations under the condition
\begin{equation}\label{eq1.8}
\int\limits_{0}  [\mu(r)]^{-\frac{1}{q-p} -\beta}\frac{dr}{r}=+\infty,
\end{equation}
with some $\beta >0 $. We note that this condition is worse than condition \eqref{eq1.7}, but at the same time it is  much better than condition \eqref{eq1.6}.

Harnack's inequality for non-uniformly elliptic conditions under  non-logarithmic condition was proved in \cite{HadSavSkrVoit}.
Later, continuity and Harnack's inequality under combining logarithmic, non-logarithmic, and non-uniformly elliptic conditions
were obtained in \cite{SavSkrYev}.

In this paper, we prove Harnack's inequality for nonnegative solutions to Eq. \eqref{eq1.1}  under the conditions \eqref{eq1.4} and
\eqref{eq1.8}.

To describe our results let us introduce the definition of a weak solution to Eq. \eqref{eq1.1}.

We say that $u$ is a bounded weak sub(super) solution to Eq. \eqref{eq1.1} if $u \in C_{\textrm{loc}}(0, T; L^{2}_{\textrm{loc}}(\Omega))\cap L_{\textrm{loc}}^{q}(0, T; W_{\textrm{loc}}^{1,q}(\Omega))\cap L^{\infty}(\Omega_{T}),$ and for any compact set $E \subset \Omega$ and any subinterval $[t_{1}, t_{2}]\subset (0, T]$ the integral identity
\begin{equation}\label{eq1.9}
\int\limits_{E}u \eta dx \bigg|^{t_{2}}_{t_{1}} + \int\limits^{t_{2}}\limits_{t_{1}}\int\limits_{E}\{-u\eta_{\tau}+ \mathbb{A}(x, \tau, \nabla u) \nabla \eta\} dx\, d\tau \leqslant (\geqslant) 0
\end{equation}
holds for any test function $\eta\negthickspace\geqslant \negthickspace0$, $\eta\negthickspace \in\negthickspace W^{1,2}(0, T; L^{2}(E))\cap L^{q}(0, T; W_{0}^{1,q}(E)).$

It would be technically convenient to have a formulation of a weak solution that involves $u_{t}.$ Let $\rho(x)\in C_{0}^{\infty}(\mathbb{R}^{n}),~\rho(x)\geqslant 0,~\rho(x)\equiv 0$ for $\mid x \mid > 1$ and $\int\limits_{\mathbb{R}^{n}}\rho (x)dx=1,$ and set

\noindent $\rho_{h}(x):= h^{-n}\rho(\frac{x}{h}),~u_{h}(x, t):= h^{-1}\int\limits_{t}\limits^{t+h}\int\limits_{\mathbb{R}^{n}}u(y, \tau)\rho_{h}(x-y)dy d\tau .$

Fix $t \in (0, T)$ and let $h>0$ be so small that $0<t<t+h<T.$ Now we take  $t_{1}=t,~~ t_{2}=t+h$ in \eqref{eq1.9}  and replace $\eta$ by $\int\limits_{\mathbb{R}^{n}}\eta(y, t)\rho_{h}(x-y)dy.$
Dividing by $h,$ since the test function does not depend on $\tau,$ we obtain
\begin{equation}\label{eq1.10}
\int\limits_{E\times \{t\}} \left(\frac{\partial u_{h}}{\partial t} \eta+[\mathbb{A}(x, t, \nabla u)]_{h} \nabla \eta\right)dx \leqslant(\geqslant)0,
\end{equation}
for all $t \in (0, T-h)$ and for all non-negative $\eta \in W^{1,q}_{0}(E).$

We refer to the parameters $M=\sup\limits_{\Omega_{T}} u, A, K_{1}, K_{2}, K_{3}, n,
p, q$  as our structural data, and we write $\gamma$ if it can be quantitatively determined a priori  in terms of the above quantities only. The generic constant $\gamma$ may change from line to line.

 As was already mentioned, the behavior of the solution in a neighborhood of a point $(x_{0},t_{0})$ depends on the value of the function $a(x_{0},t_{0})$. We will distinguish two cases: $a(x_{0},t_{0}) > 0$ (so-called $(p,q)$-phase)  and $a(x_{0},t_{0})=0$ (so-called $p$-phase).

First result is Harnack's inequality for positive solutions to \eqref{eq1.1} in the $(p,q)$-phase.

\begin{theorem}\label{th1.1}
Fix point  $(x_{0},t_{0}) \in \Omega_{T}$, let $u \in C(\Omega_{T})$ be a positive bounded weak solution to Eq. \eqref{eq1.1} and let the conditions \eqref{eq1.2}--\eqref{eq1.4} be fulfilled. Assume also that
$$
a(x_{0},t_{0}) >0.
$$
Then there exists $R > 0$, depending only on the data and $a(x_{0},t_{0})$, and there exist positive numbers $c, C$, depending only upon the data, such that for all $\rho \leqslant R^{2}$, either
\begin{equation}\label{eq1.11}
 u(x_{0}, t_{0}) \leqslant C \rho^{\frac{1}{2}},
\end{equation}
or
\begin{equation}\label{eq1.12}
u(x_{0}, t_{0})\leqslant C \inf\limits_{B_{\rho}(x_{0})} u (\cdot, t)
\end{equation}
 with $t \in (t_{0}+\frac{1}{2}\theta, t_{0}+\theta),~\theta:=\frac{\rho^{2}}{\psi\big(x_{0},t_{0},c\dfrac{u(x_{0},t_{0})}{\rho}\big)},$ provided that
$$Q_{\rho, \theta}(x_{0}, t_{0}) \subset Q_{\rho,\rho}(x_{0},t_{0}) \subset Q_{R,R^{2}}(x_{0}, t_{0}) \subset Q_{8R,(8R)^{2}}(x_{0},t_{0}) \subset \Omega_{T}.$$
The function $\psi(x_{0},t_{0},v)$, $v>0$ was defined in \eqref{eq1.4}.

\end{theorem}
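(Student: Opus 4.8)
The plan is to follow the classical De Giorgi–Moser strategy for intrinsic Harnack inequalities adapted to the double-phase setting, as in the work on $p$-Laplacian and doubly nonlinear parabolic equations (DiBenedetto–Gianazza–Vespri), but now with the $(\varPhi_\lambda)$ and $(\varPhi_\mu)$ conditions of Remark~\ref{rem1.1} supplying the substitute for homogeneity that the genuine $p$-Laplacian enjoys. First I would fix the intrinsic geometry: set $M_0 := u(x_0,t_0)$, choose the intrinsic waiting time $\theta = \rho^2/\psi(x_0,t_0,c M_0/\rho)$ with $c$ small to be determined, and work in the forward cylinder $Q^+_{\rho,\theta}(x_0,t_0)$. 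Since $a(x_0,t_0)>0$, the function $\psi(x_0,t_0,\cdot)$ genuinely behaves like $v^{p-2}+a(x_0,t_0)v^{q-2}$, which is comparable to $a(x_0,t_0)v^{q-2}$ for large $v$ and hence gives a definite $q$-type degeneracy; the smallness alternative \eqref{eq1.11} handles the case where $M_0/\rho$ is not large, i.e. $u(x_0,t_0)\le C\rho^{1/2}$ (note $\theta\le\rho$ is needed so that all cylinders sit inside $Q_{\rho,\rho}$, which is where \eqref{eq1.3} and thus the $\varPhi$-conditions apply; this is exactly where the constraint $\rho\le R^2$ enters). The first substantive step is a measure-theoretic lemma: starting from the pointwise value $u(x_0,t_0)=M_0$ and using continuity plus a De Giorgi-type energy estimate (Caccioppoli inequality derived from \eqref{eq1.9} with truncations $(u-k)_\pm$), propagate positivity to show that $u(\cdot,t)\ge\sigma M_0$ on a ball $B_{c_1\rho}(x_0)$ for a whole time interval comparable to $\theta$, for some structural $\sigma\in(0,1)$. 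This "expansion of positivity in time" is the engine.

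Next I would run expansion of positivity in space: from a lower bound on a small ball at one time level, use the Caccioppoli estimate together with a logarithmic test function (or a De Giorgi iteration on $\log\frac{H}{u+d}$) to spread the lower bound to $B_{2\rho}(x_0)$, paying a controlled price in the constant. The novelty compared with the $p$-Laplacian is that the energy estimates do not scale homogeneously; instead one uses the $(\varPhi_\mu)$ inequality to compare $\varPhi^+_{Q_{r,r}}$ and $\varPhi^-_{Q_{r,r}}$ evaluated at the intrinsic level $v = M_0$ (valid since $\rho< M_0 \le K$ after the smallness alternative is ruled out and $\rho$ is small), which introduces the factor $\mu(\rho)$ into the constants at each dyadic scale. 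Therefore a single step of the iteration degrades the constant by a power of $\mu(\rho)$, and the proof must control the accumulation of these factors over the $\sim\log(1/\rho)$ scales needed to pass from the pointwise value to the infimum on $B_\rho$. This is precisely where condition \eqref{eq1.8}, $\int_0\mu^{-1/(q-p)-\beta}(r)\,\frac{dr}{r}=+\infty$, is used: it guarantees that, after choosing $R$ small enough depending on $a(x_0,t_0)$, the product of the per-scale losses stays bounded below by a structural constant $C^{-1}$, yielding \eqref{eq1.12}.

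Concretely, the iteration I have in mind is a finite geometric-type recursion: let $\rho_j = 2^{-j}\rho_*$ for $j=0,\dots,j_*$ with $\rho_{j_*}\sim\rho$ and $\rho_0\sim R$ (or the reverse, depending on whether one contracts or expands), and let $M_j$ denote the infimum of $u$ on $B_{\rho_j}(x_0)$ at the relevant intrinsic time slice; the expansion-of-positivity machinery gives $M_{j+1}\ge \eta(\mu(\rho_j))\,M_j$ where $\eta(s)\sim s^{-(1/(q-p)+\beta')}$ for a structural $\beta'$ that one must keep $\le\beta$. Iterating, $M_{j_*}\ge M_0\prod_j\eta(\mu(\rho_j))$, and by the standard comparison of the product $\prod\eta(\mu(\rho_j))$ with $\exp\big(-\gamma\sum_j \mu(\rho_j)^{-1/(q-p)-\beta'}\big)$ against the divergent integral in \eqref{eq1.8} (monotonicity of $\mu$ makes the sum comparable to the integral), the product is bounded below by a constant once $R$ is small. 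I expect the main obstacle to be bookkeeping the intrinsic rescaling consistently: each scale $\rho_j$ carries its own intrinsic time $\theta_j$ defined through $\psi(x_0,t_0,\cdot)$, and one must verify that the nested cylinders $Q_{\rho_j,\theta_j}$ stay comparable and inside $Q_{\rho,\rho}\subset Q_{R,R^2}$ (this is why the chain $Q_{\rho,\theta}\subset Q_{\rho,\rho}\subset Q_{R,R^2}\subset Q_{8R,(8R)^2}\subset\Omega_T$ is hypothesized), and that the level $v=M_0/\rho_j$ at which the $\varPhi$-conditions are invoked remains in the admissible range $r< v\le K$ (or $\le K\lambda(r)$) uniformly in $j$ — failure of this range condition at the coarsest scales is the reason $R$ must be taken small depending on $a(x_0,t_0)$. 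The remaining pieces — the Caccioppoli and logarithmic estimates, the De Giorgi lemma on shrinking balls, and the measure-to-pointwise bound — are by now standard once the non-homogeneous scaling is tracked, so the heart of the argument is the interplay between the $\mu$-dependent loss per scale and the integral condition \eqref{eq1.8}.
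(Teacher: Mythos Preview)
Your proposal misses the central point of the $(p,q)$-phase case and ends up invoking machinery that is neither available nor needed. The paper's proof (given entirely in Remark~\ref{rem1.2}) is a short reduction, not an intrinsic-scaling iteration: one chooses $R$ from the equation $A R^{q-p}\mu(R)=\tfrac14 a(x_0,t_0)$, so that condition~\eqref{eq1.3} forces $\tfrac34 a(x_0,t_0)\le a(x,t)\le\tfrac54 a(x_0,t_0)$ throughout $Q_{R,R^2}(x_0,t_0)$. In other words, on this cylinder the coefficient $a$ is essentially constant and strictly positive, so by Young's inequality the structure conditions~\eqref{eq1.2} become those of a genuine evolutionary $q$-Laplacian (with a harmless lower-order term of size $a(x_0,t_0)^{-(q-1)/(q-p)}$). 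After the time rescaling $\tau=a(x_0,t_0)\,t$ one is literally in the setting of DiBenedetto--Gianazza--Vespri, and their Harnack inequality yields~\eqref{eq1.12} directly. The alternative~\eqref{eq1.11} is what guarantees the intrinsic cylinders stay inside $Q_{R,R^2}$ and that $\psi(x_0,t_0,u_0/\rho)$ is comparable to $a(x_0,t_0)(u_0/\rho)^{q-2}$.

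Your plan, by contrast, runs a multi-scale expansion-of-positivity with a $\mu(\rho_j)$-dependent loss per dyadic step and then appeals to the divergence condition~\eqref{eq1.8} to sum the losses. There are two problems. First, condition~\eqref{eq1.8} is \emph{not} a hypothesis of Theorem~\ref{th1.1}; only \eqref{eq1.2}--\eqref{eq1.4} are assumed, so your argument as written proves a weaker statement. Second, and more fundamentally, once $R$ is fixed as above there is no $\mu$-dependent loss at all inside $Q_{R,R^2}$: the ratio $\varPhi^+/\varPhi^-$ is bounded by a constant depending only on the data and $a(x_0,t_0)$ (since $a$ is pinned near $a(x_0,t_0)$), so the per-scale degradation you describe does not occur. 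The elaborate iteration and the integral condition~\eqref{eq1.8} are the tools for Theorem~\ref{th1.2}, the $p$-phase $a(x_0,t_0)=0$, where the reduction to a single standard equation is unavailable; you have transplanted that machinery into the wrong theorem.
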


\begin{remark}\label{rem1.2}
Choosing $R\negthickspace>\negthickspace0$ from the condition $A R^{q-p}\mu(\negmedspace R\negmedspace)\negthickspace=\negthickspace\frac{1}{4}a(\negthinspace x_{0},t_{0}\negthinspace)$,  we have $\frac{3}{4}a(x_{0},t_{0})\negthickspace\leqslant\negthickspace a(x,t)\negthickspace\leqslant \negthickspace \frac{5}{4}a(x_{0},t_{0})$ for any $(x,t)\in \negthickspace Q_{R,R^{2}}(x_{0},t_{0})$,
and by the Young inequality conditions \eqref{eq1.2} can be rewritten as follows:
$$
 \frac{1}{a(x_{0},t_{0})} \mathbb{A}(x, t, \xi)\xi  \geqslant \frac{K_{1}}{a(x_{0},t_{0})}\big(|\xi|^{p} +a(x,t)|\xi|^{q}\big)
\geqslant \frac{3}{4}K_{1}|\xi|^{q},\quad q>2,
$$
$$
 \frac{1}{a(x_{0},t_{0})}\mid \mathbb{A}(x, t, \xi)\mid  \leqslant \frac{K_{2}}{a(x_{0},t_{0})}\big(|\xi|^{p-1} +a(x,t)|\xi|^{q-1}\big)
 \leqslant $$
 $$
\leqslant \gamma(K_{2}) \left( \mid \xi \mid ^{q-1} + a(x_{0},t_{0})^{-\frac{q-1}{q-p}} \right).
$$
If \eqref{eq1.11} is violated we set $\tau=a(x_{0},t_{0}) t$ which transforms Eq. \eqref{eq1.1} into
$$
u_{\tau}-\textrm{div}\mathbb{\bar{A}}(x, \tau, \nabla u)=0 ,\quad \mathbb{\bar{A}}=\frac{1}{a(x_{0},t_{0})}\mathbb{A}
$$
in $Q_{\rho,\bar{\theta}}(x_{0},t_{0})~, \bar{\theta}=\gamma \dfrac{\rho^{q}}{u(x_{0},t_{0})^{q-2}}$. By the results of DiBenedetto, Gia-\\nazza and Vespri \cite{DiBGiVe1}, returning to the original coordinates, it follows that

$u(x_{0}, t_{0})\leqslant C \inf\limits_{B_{\rho}(x_{0})} u (\cdot, t),~t\in(t_{0}+\frac{1}{2}\bar{\theta}',t_{0}+\bar{\theta}')~,~
\bar{\theta}'= \dfrac{\bar{\theta}}{a(x_{0},t_{0})},$\\
provided that $u(x_{0},t_{0}) \geqslant \gamma \rho ~[a(x_{0},t_{0})]^{-\frac{1}{q-p}}$, which holds if \eqref{eq1.11} is violated. Indeed,
$$u(x_{0},t_{0}) \geqslant c \rho^{\frac{1}{2}} \geqslant c \rho R^{-1} \geqslant \gamma(A,M) \rho~ \mu^{\frac{1}{q-p}}(\bar{c}R)
 [a(x_{0},t_{0})]^{-\frac{1}{q-p}} \geqslant
  $$
  $$
  \geqslant\gamma(A,M) \rho ~[a(x_{0},t_{0})]^{-\frac{1}{q-p}}.$$
 To complete the proof of Theorem \ref{th1.1} we note that if inequality \eqref{eq1.11} is violated then
\begin{multline*}
 a(x_{0},t_{0})\bigg(\frac{u(x_{0},t_{0})}{\rho}\bigg)^{q-2} \leq \psi(x_{0},t_{0},\frac{u(x_{0},t_{0})}{\rho})\leqslant \\ \leqslant a(x_{0},t_{0})\bigg(\frac{u(x_{0},t_{0})}{\rho}\bigg)^{q-2}  \big\{1+ \frac{C^{p-q}}{a(x_{0},t_{0})}\rho^{\frac{q-p}{2}}\big\} \leqslant\\ \leqslant a(x_{0},t_{0})\bigg(\frac{u(x_{0},t_{0})}{\rho}\bigg)^{q-2}\big\{1+\frac{C^{p-q}}{a(x_{0},t_{0})} R^{q-p}\big\}\leqslant  \\ \leqslant \gamma(C,A) a(x_{0},t_{0})\bigg(\frac{u(x_{0},t_{0})}{\rho}\bigg)^{q-2} \big\{1+\frac{1}{\mu(R)}\big\}\leqslant\\
 \leqslant 2\gamma(C,A) a(x_{0},t_{0})\bigg(\frac{u(x_{0},t_{0})}{\rho}\bigg)^{q-2} .
\end{multline*}

\noindent Therefore, Theorem \ref{th1.1} is a consequence of the results by DiBenedetto, Gianazza and Vespri, we
refer the reader to \cite{DiBGiVe1} for the details.

\end{remark}

Our next result corresponds to the $p$-phase. Set
$$\lambda_{1}(r) := [\mu(r)]^{-\frac{1}{q-p}-n} .$$
 Further we will also suppose that with some $b_{1} \geqslant 1$  the following condition holds
\begin{equation}\label{eq1.13}
\lambda_{1}(\rho)\leqslant \bigg(\frac{\rho}{r}\bigg)^{b_{1}} \lambda_{1}(r) , \quad 0<r< \rho.
\end{equation}
Note that for the function $\mu(r)=[\log\frac{1}{r}]^{L}$, $L>0$  this condition is fulfilled automatically.
\begin{theorem}\label{th1.2}
Fix $(x_{0},t_{0}) \in \Omega_{T}$, let $u \in C(\Omega_{T})$ be a positive bounded weak solution to Eq. \eqref{eq1.1} and let  conditions \eqref{eq1.2}--\eqref{eq1.4}, \eqref{eq1.13} be fulfilled. Assume also that
$$
a(x_{0},t_{0}) =0,
$$
and
\begin{equation}\label{eq1.14}
(\mathbb{A}(x, t, \xi)-\mathbb{A}(x, t, \eta))(\xi-\eta)>0,~ \xi, \eta \in \mathbb{R}^{n},~ \xi\neq\eta.
\end{equation}
Then there exist positive numbers  $c$, $c_{1}$, $C$ depending only upon the data such that for all $\rho > 0$
either
\begin{equation}\label{eq1.15}
u(x_{0}, t_{0}) \leqslant C\frac{\rho}{\lambda_{1}(\rho)},
\end{equation}
or
\begin{equation}\label{eq1.16}
u(x_{0}, t_{0})\negthickspace\leqslant\negthickspace\frac{C}{\lambda_{1}(\rho)} \inf\limits_{B_{\rho}(x_{0})} u(\cdot, t),
\end{equation}
with $t\in (t_{0}+ c\theta,t_{0}+ c_{1}\theta), \theta:= \rho^{p} (\lambda_{1}(\rho)u(x_{0},t_{0}))^{2-p},$
provided that
$$Q_{\rho, \theta}(x_{0}, t_{0})\subset Q_{\rho,\rho}(x_{0},t_{0})\subset Q_{8\rho, 8\rho}(x_{0}, t_{0})  \subset \Omega_{T}.$$
\end{theorem}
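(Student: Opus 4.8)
Next I would establish Theorem~\ref{th1.2} by adapting the intrinsic-scaling technique of DiBenedetto--Gianazza--Vespri to the $\varPhi$-structure of Remark~\ref{rem1.1} and to the non-logarithmic growth of $\mu$; since $a(x_{0},t_{0})=0$, one cannot reduce the equation to a known one as in Remark~\ref{rem1.2}, and the whole difficulty is to carry the degenerate-$p$-Laplacian machinery through with a genuine $\mu$-dependent perturbation. After translating $(x_{0},t_{0})$ to the origin we may assume that \eqref{eq1.15} fails, so that, with $M_{0}:=u(0,0)$, the ratio $\lambda_{1}(\rho)M_{0}/\rho$ exceeds any prescribed constant. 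Because $a(0,0)=0$, condition \eqref{eq1.3} forces $0\le a(x,t)\le A\mu(r)r^{q-p}$ on $Q_{r,r}(0,0)$, so on every such cylinder the term $a(x,t)|\nabla u|^{q}$ in \eqref{eq1.2} is, at the gradient level $v/r$, a perturbation of $|\nabla u|^{p}$ whose relative size is at most of order $\mu(r)v^{q-p}$; choosing the intrinsic cylinder so that the relevant gradient level is $\lambda_{1}(\rho)M_{0}/\rho$ and its time length is $\theta=\rho^{p}(\lambda_{1}(\rho)M_{0})^{2-p}$ keeps that perturbation bounded and makes $\psi$-type quantities at this level comparable to $(\lambda_{1}(\rho)M_{0}/\rho)^{p-2}$. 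Condition \eqref{eq1.14} supplies the strict monotonicity needed for the comparison arguments.

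First I would record the local machinery for bounded weak sub/supersolutions of \eqref{eq1.1}: Caccioppoli estimates obtained by testing the mollified identity \eqref{eq1.10} with $\pm(u-k)_{\pm}\zeta^{q}$ for a cutoff function $\zeta$, and logarithmic estimates obtained with a test function built from a truncated logarithm of $u$ times $\zeta^{q}$; in both, the contribution of $a(x,t)|\nabla u|^{q}$ is absorbed into the $p$-term up to a factor $\mu(r)$ via condition $(\varPhi_{\mu})$, after which one passes to the limit $h\to0$. From these one obtains, on intrinsic cylinders, the two De Giorgi alternatives: a measure-shrinking lemma (if $u$ exceeds a level $N$ on a large portion of $B_{r}$ at some time slice, then $u\ge N/2$ on a smaller intrinsic subcylinder), the companion statement controlling the set where $u$ is small, and the estimate propagating a full-ball positivity forward in time.

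Next I would establish the expansion-of-positivity lemma adapted to this intrinsic geometry, which is the heart of the argument: if $|\{x\in B_{r}:u(x,\bar t)\ge N\}|\ge\alpha|B_{r}|$ at some time $\bar t$, then $u\ge\sigma N$ on $B_{2r}$ throughout a later time interval of length comparable to $r^{p}N^{2-p}$, where $\alpha\in(0,1)$ is free and the shrinking factor $\sigma$ degrades by a power of $\mu(r)$ each time the double-phase regime is crossed. Because $\mu$ blows up, the De Giorgi iteration behind this lemma does not close at a single scale and must be carried out along a geometric sequence of shrinking scales; here the Dini-type condition \eqref{eq1.8} is exactly what guarantees the convergence of that iteration, while \eqref{eq1.13} keeps $\lambda_{1}$ power-type so that the accumulated loss is bounded below by a constant multiple of $\lambda_{1}(\rho)$. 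Combining the lemma with the standard DiBenedetto device of comparing the supremum of $u$ over shrinking cylinders $Q_{s}$ with a suitable negative power of $1-s$ locates a ball and a level on which $u$ is comparable to its supremum and, by a measure estimate, $u$ is bounded below on a fixed portion of that ball; propagating this forward with the expansion-of-positivity lemma and then along a finite Harnack chain $B_{\delta\rho}\subset B_{2\delta\rho}\subset\cdots\subset B_{\rho}$ kept inside $Q_{8\rho,8\rho}(0,0)$ yields $u(\cdot,t)\ge c\,\lambda_{1}(\rho)M_{0}$ on $B_{\rho}$ for $t\in(c\theta,c_{1}\theta)$, which is \eqref{eq1.16}.

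I expect the main obstacle to be the expansion-of-positivity step together with the bookkeeping of the $\mu$-dependences: one must pin the intrinsic time scale so that the $p$- and $q$-phases rescale consistently on every cylinder of the chain, control the degeneration of the De Giorgi iteration constants as $\mu(r)\to\infty$ by means of \eqref{eq1.8} and \eqref{eq1.13}, and verify that the total loss is absorbed exactly into the single factor $\lambda_{1}(\rho)$ appearing in the statement, leaving all remaining constants dependent on the data only. A secondary but unavoidable point is the rigorous use of the mollification $u_{h}$ in \eqref{eq1.10} for the energy and logarithmic estimates and the passage $h\to0$, which is routine but must be handled with care because of the $q$-growth term.
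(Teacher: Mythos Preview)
Your outline captures the DiBenedetto intrinsic-scaling philosophy, but it misses the central device of the paper's proof and, as written, would not close. The paper itself warns (see the discussion of \eqref{eq1.5}--\eqref{eq1.8} in the introduction) that running the standard expansion of positivity starting from a set of measure fraction $\alpha(r)\sim[\mu(r)]^{-n}$ inevitably produces a lower bound of the form $N\gamma_{1}^{-1}\exp(-\gamma_{1}[\alpha(r)\mu(r)]^{-\gamma_{2}})$, i.e.\ an \emph{exponential} loss in $\mu$, which forces the much stronger condition~\eqref{eq1.6} rather than~\eqref{eq1.8}. Your proposed cure---iterating the expansion along a dyadic Harnack chain and summing $\mu$-losses via~\eqref{eq1.8}---does not escape this: at each step the De Giorgi constants degenerate exponentially in the reciprocal measure fraction, so the product over a chain still blows up. In fact \eqref{eq1.8} plays no role in the proof of Theorem~\ref{th1.2}; the factor $\lambda_{1}(\rho)$ in \eqref{eq1.16} already absorbs the full $\mu$-loss, and logarithmic estimates are not used.

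What the paper does instead is introduce \emph{auxiliary solutions} (Section~\ref{Sect3}), a device going back to Maz'ya and Landis. After the Krylov--Safonov localization one finds a point $(y,\bar s)$, a radius $r\sim(1-\tau_{0})\rho$, and a set $E=\{x\in B_{r}(y):u(\cdot,\bar s)\ge\tfrac12N_{\tau_{0}}\lambda(r)\}$ of measure at least $\tfrac{\nu}{2}[\mu(r)]^{-n}|B_{r}|$. One then solves the same equation in the large cylinder $Q^{+}_{8\rho,8\tau_{1}}(y,\bar s)$ with zero lateral data and initial data $N\lambda(r)\chi_{E}$; the monotonicity \eqref{eq1.14} is used precisely here, to get $u\ge v$ by comparison. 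The key is Lemma~\ref{lem3.2}: the auxiliary solution $v$ satisfies a measure lower bound on $B_{4\rho}(y)$ at level $\varepsilon_{1}N_{\tau_{0}}\lambda(r)|E|/\rho^{n}$ with a \emph{fixed} fraction $\alpha_{1}$ independent of $\mu$. Thus the construction trades small initial measure for a lowered level, after which Theorem~\ref{th4.1} (expansion of positivity) is applied \emph{once}, with $\alpha=\alpha_{1}$ fixed, and the only $\mu$-dependence sits in the level $\varepsilon_{1}N_{\tau_{0}}\lambda(r)[\mu(r)]^{-n}(r/\rho)^{n}\sim u_{0}\lambda_{1}(\rho)$. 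No Harnack chain is needed; the passage from $B_{r}$ to $B_{\rho}$ happens in one shot through $v$. There is also a case split according to whether $\max_{Q_{4r,4r}(y,\bar s)}a$ lies below or above $4A\mu(4r)(4r)^{q-p}$: in the latter case one first uses a second auxiliary solution $w$ on an intermediate scale $\rho_{0}$ to reach a ball on which the $(\varPhi_{\lambda})$ condition again applies, and then repeats the $v$-construction. This is the mechanism that keeps the lower bound independent of $\max a$ and is absent from your plan.
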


\begin{remark}\label{rem1.3}
 We note that in the case  $\mu (\rho)=[\log\frac{1}{\rho}]^{L},\quad
0\leqslant L \leqslant \dfrac{q-p}{1+ n (q-p)},$
inequality \eqref{eq1.16} transformes into
\begin{equation}\label{eq1.17}
u(x_{0}, t_{0})\leqslant C \log \frac{1}{\rho} \inf\limits_{B_{\rho}(x_{0})} u (\cdot, t_{0}+\theta),\quad \theta= c\rho^{p}\bigg(\frac{\log\frac{1}{\rho}}{u(x_{0},t_{0})}\bigg)^{p-2}.
\end{equation}

\end{remark}

We would like to mention the approach taken in this paper. To prove our results we use DiBenedetto's approach \cite{DiBenedettoDegParEq}, who developed  innovative intrinsic scaling methods for degenerate and singular parabolic equations. For the p-Laplace evolution equation the intrinsic Harnack's inequality was proved in the  papers \cite{DiBGiVe1, DiBGiVes2}.

The difficulties arising in the proof of our Theorem \ref{th1.2} are related to the so-called theorem on the expansion of positivity. Roughly speaking, having information on the measure of the "positivity set" of $u$ over the ball $B_{r}(\bar{x})$ for some time level $\bar{t}$:
$$\mid \{ x \in B_{r}(\bar{x}): u(x, \bar{t}) \geqslant N \}\mid \geqslant \alpha(r) \mid B_{r} (\bar{x}) \mid,$$
\noindent with some $r>0,~ N>0$ and $\alpha(r)\in (0, 1), ~ \alpha(r)\rightarrow 0,$ as $r\rightarrow 0,$ and using the standard DiBenedetto's arguments, we inevitably arrive at the estimate
$$u(x, t) \geqslant \frac{N}{\gamma_{1}} \exp\big(-\gamma_{1}[\alpha(r)\mu(r)]^{-\gamma_{2}}\big) , ~ x \in B_{2r}(\bar{x}),$$
\noindent for some time level $t>\bar{t}$ and with some $\gamma_{1}, \gamma_{2}>1.$ This estimate leads us to a condition similar to that of \eqref{eq1.6} (see, e.g. \cite{ShSkrVoit20, SkrVoitPrepr2021}). To avoid this, we use a workaround that goes back to Maz'ya  \cite{Maz} and Landis \cite {Landis1, Landis2} papers. So, in Section $3$ we use the auxiliary solutions and prove integral and pointwise estimates of these solutions.

Another difficulty arising in the proof of Theorem \ref{th1.2} is also closely related to the theorem on the expansion of positivity. Namely,
if we expand the positivity  from the small ball $B_{r}(\bar{x})$ and time level $\bar{t}$ to the large ball $B_{\rho}(x_{0})$ and some time level $t >\bar{t}$ in the case when $a(x_{0},t_{0})=0$ and
$\max\limits_{Q_{4r, 4r}(\bar{x},\bar{t})}a(x, t)\geqslant 4A~\mu(4r)(4r)^{q-p}$ for some $(\bar{x},\bar{t}) \in Q_{\rho,\rho}(x_{0}, t_{0})$, we need to obtain the lower bound of a solution independent of $\max\limits_{Q_{4r, 4r}(\bar{x},\bar{t})} a(x,t)$. For this, we also use the auxiliary solutions defined in  Section \ref{Sect3}.

The rest of the paper contains a proof of the above theorems. In Section \ref{Sect2} we collect some auxiliary propositions. Section \ref{Sect3} contains the proof of the required integral and pointwise estimates of auxiliary solutions.\,\, Expansion\,\, of \,\,positivity\,\, is\,\, proved \,\,in\,\, Section\,\, \ref{Sect4}. \,\,In \,\,Sec-\\tion \ref{Sect5}~we give a proof of Harnack's inequality using pointwise estimates of auxiliary solutions.

\section{Auxiliary material and integral estimates of solutions}\label{Sect2}

\textbf{2.1. An auxiliary proposition}

The following  lemma will be used in the sequel, it is  the well-known De Giorgi-Poincare lemma (see \cite{DiBenedettoDegParEq}, Chapter I).

\begin{lemma}\label{lem2.1}
{\it Let $u \in W^{1,1}(B_{r}(y))$ for some $r > 0$, and $y \in \mathbb{R}^{n}$. Let $k, l$ be real
numbers such that $k < l$. Then there exists a constant $\gamma$ depending only on $n$ such that
\begin{equation*}
(l-k) |A_{k,r}||B_{r}(y)\setminus A_{l,r}| \leqslant \gamma r^{n+1} \int\limits_{A_{l,r}\setminus A_{k,r}} |\nabla u| dx,
\end{equation*}
where $A_{k,r} = B_{r}(y)\cap \{u < k\}$.
}
\end{lemma}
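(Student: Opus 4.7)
The plan is to reduce to the standard $L^{1}$ Poincar\'e inequality on a ball, applied to a suitable truncation of $u$, and then to perform a mean-value dichotomy. This is the classical argument from DiBenedetto's book.

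First I would introduce the truncation
\[ v := \min\{l,\max\{u,k\}\} - k \in W^{1,1}(B_r(y)), \]
so that $0 \le v \le l-k$, with $v=0$ on $A_{k,r}$, $v=l-k$ on $B_r(y)\setminus A_{l,r}$, and $\nabla v = \nabla u \cdot \chi_{A_{l,r}\setminus A_{k,r}}$ (from the chain rule for Lipschitz compositions of Sobolev functions). Writing $\bar v$ for the mean of $v$ over $B_r(y)$, the ordinary Poincar\'e inequality on the ball yields
\[ \int_{B_r(y)} |v-\bar v|\, dx \le \gamma(n)\, r \int_{A_{l,r}\setminus A_{k,r}} |\nabla u|\, dx. \]

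The heart of the argument is then a dichotomy on $\bar v$. If $\bar v \ge (l-k)/2$, then $|v-\bar v| = \bar v \ge (l-k)/2$ on $A_{k,r}$, hence
\[ \tfrac{1}{2}(l-k)\, |A_{k,r}| \le \int_{A_{k,r}} |v-\bar v|\, dx \le \gamma(n)\, r \int_{A_{l,r}\setminus A_{k,r}} |\nabla u|\, dx, \]
and multiplying through by $|B_r(y)\setminus A_{l,r}| \le |B_r(y)| = c_n r^n$ produces the claim. In the opposite case $\bar v < (l-k)/2$, one has $|v-\bar v| = (l-k)-\bar v > (l-k)/2$ on $B_r(y)\setminus A_{l,r}$, and the symmetric argument -- with the roles of $A_{k,r}$ and $B_r(y)\setminus A_{l,r}$ interchanged, and multiplication by $|A_{k,r}|$ at the end -- closes the estimate.

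Since the statement is classical, no serious obstacle arises; the only points worth mentioning are the chain rule used to differentiate the truncation $v$, and the fact that the constant in the $L^{1}$ Poincar\'e inequality on a ball depends only on the dimension $n$, both of which are standard.
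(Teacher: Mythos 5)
Your proof is correct and complete. Note that the paper itself offers no argument for this lemma -- it simply cites DiBenedetto's book, Chapter I -- so the relevant comparison is with that classical proof, which proceeds differently: there one fixes $x$ with $u(x)\geqslant l$ and $z$ with $u(z)<k$, writes $(l-k)|A_{k,r}|\leqslant\int_{A_{k,r}}(v(x)-v(z))\,dz$ for the same truncation $v$, controls $v(x)-v(z)$ by the Riesz potential $\int_{B_r}|\nabla v(y)|\,|x-y|^{1-n}\,dy$, and then integrates over $x\in B_{r}(y)\setminus A_{l,r}$ and applies Fubini; the product of the two measures appears directly. Your route instead takes the $L^{1}$ Poincar\'e inequality on the ball as a black box and recovers the product of measures through the dichotomy on $\bar v$, using $|v-\bar v|\geqslant(l-k)/2$ on whichever of the two sets $A_{k,r}$ or $B_{r}(y)\setminus A_{l,r}$ is favored, and then paying a crude factor $|B_{r}(y)|=c_{n}r^{n}$ for the other set. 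This is shorter and avoids the representation formula, at the cost of importing the Poincar\'e inequality (itself usually proved by the very same potential estimate); both arguments yield a constant depending only on $n$, and all the technical points you flag (the chain rule for the Lipschitz truncation, the scaling $r$ in the Poincar\'e constant) are indeed standard and correctly handled.
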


\textbf{2.2. Local material and energy estimates}

Further we will need the following local energy estimate.

\begin{lemma}\label{lem2.2}
Let u be a bounded weak solution to \eqref{eq1.1} in $\Omega_{T}$. Then for any cylinder $Q^{-}_{r,\theta}(\bar{x},\bar{t})\subset \Omega_{T}$, any $k \in \mathbb{R}^{1}$, any $\sigma \in(0,1)$ and any smooth $\zeta(x,t)$ which vanishes on $\partial B_{r}(\bar{x})\times(\bar{t}-\theta,\bar{t})$ and $|\nabla \zeta| \leqslant \dfrac{1}{\sigma r}$ one has
\begin{multline}\label{eq2.1}
\sup\limits_{\bar{t}-\theta<t<\bar{t}}\int\limits_{B_{r}(\bar{x})}(u-k)^{2}_{\pm} \zeta^{q}dx +
\gamma^{-1}\iint\limits_{Q^{-}_{r,\theta}(\bar{x},\bar{t})}\varPhi(x,t,|\nabla (u-k)_{\pm}|)\zeta^{q} dxdt \leqslant\\
\leqslant \int\limits_{B_{r}(\bar{x})} (u-k)^{2}_{\pm}\zeta^{q}(x,\bar{t}-\theta) dx+
\gamma\iint\limits_{Q^{-}_{r,\theta}(\bar{x},\bar{t})} (u-k)^{2}_{\pm}|\zeta_{t}|\zeta^{q-1}dxdt+\\
+\frac{\gamma}{\sigma^{q}}\varPhi^{+}_{Q_{r,\theta}(\bar{x}, \bar{t})}\bigg(\frac{M_{\pm}(k,r,\theta)}{r}\bigg)\big|Q_{r,\theta}(\bar{x}, \bar{t})\cap \big\{(u-k)_{\pm}>0\big\}\big|,
 \end{multline}
here $M_{\pm}(k,r,\theta):=\sup\limits_{Q_{r,\theta}(\bar{x},\bar{t})} (u-k)_{\pm}$.
\end{lemma}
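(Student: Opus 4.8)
The plan is to derive the energy estimate \eqref{eq2.1} by testing the weak formulation \eqref{eq1.10} with a standard truncated-and-localized test function and then integrating in time. Since the estimate involves $u_t$, I would work with the Steklov-averaged formulation \eqref{eq1.10} and pass to the limit as $h\to 0$ at the end. Concretely, for the $(u-k)_+$ case (the $(u-k)_-$ case being symmetric after replacing $u$ by $-u$), I would use the test function $\eta = (u_h-k)_+\,\zeta^q$ on the slab $B_r(\bar x)\times\{t\}$, which is admissible since $\zeta$ vanishes on the lateral boundary $\partial B_r(\bar x)\times(\bar t-\theta,\bar t)$; then integrate the resulting inequality over $(\bar t-\theta,s)$ for $s\in(\bar t-\theta,\bar t)$.

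The parabolic term $\int (\partial_t u_h)(u_h-k)_+\zeta^q\,dx$ is handled in the usual way: $(\partial_t u_h)(u_h-k)_+ = \tfrac12\partial_t (u_h-k)_+^2$, so after integration by parts in $t$ one gets the boundary term $\tfrac12\int_{B_r}(u_h-k)_+^2\zeta^q\,dx\big|_{\bar t-\theta}^{s}$ plus the error term $-\tfrac q2\iint (u_h-k)_+^2\zeta^{q-1}\zeta_t\,dx\,dt$, which is absorbed into $\gamma\iint (u-k)_+^2|\zeta_t|\zeta^{q-1}$ after letting $h\to0$. For the diffusion term, $\nabla\eta = \zeta^q\nabla(u-k)_+ + q\zeta^{q-1}(u-k)_+\nabla\zeta$, and I would use the lower bound in \eqref{eq1.2}, namely $\mathbb A(x,t,\nabla u)\cdot\nabla(u-k)_+ \geqslant K_1\varPhi(x,t,|\nabla(u-k)_+|)$ on the set $\{u>k\}$, to produce the good term $\gamma^{-1}\iint\varPhi(x,t,|\nabla(u-k)_+|)\zeta^q$. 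For the cross term I would invoke the upper bound $|\mathbb A(x,t,\nabla u)|\leqslant K_2(|\nabla(u-k)_+|^{p-1}+a|\nabla(u-k)_+|^{q-1})$ together with $|\nabla\zeta|\leqslant 1/(\sigma r)$ and Young's inequality (separately in the $p$-part and the $q$-part, since $\varPhi$ is a sum of two powers) to split it into a small multiple of the good term $\iint\varPhi(x,t,|\nabla(u-k)_+|)\zeta^q$ (reabsorbed on the left) plus a term controlled by $\tfrac{\gamma}{\sigma^q}\iint_{\{u>k\}} \varPhi\!\big(x,t,(u-k)_+/r\big)\zeta^q$; bounding $(u-k)_+\leqslant M_+(k,r,\theta)$ and $\varPhi(x,t,\cdot)\leqslant \varPhi^+_{Q_{r,\theta}(\bar x,\bar t)}(\cdot)$ on the support, and estimating the measure of the support by $|Q_{r,\theta}(\bar x,\bar t)\cap\{(u-k)_+>0\}|$, yields exactly the last term in \eqref{eq2.1}.

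The main technical obstacle is the $(p,q)$-structure: because $\varPhi$ is not homogeneous, Young's inequality must be applied in two pieces, and one must be careful that the weight $a(x,t)$ sits inside $\varPhi$ consistently on both sides — here the pointwise comparison $\varPhi(x,t,\cdot)\leqslant\varPhi^+_{Q_{r,\theta}}(\cdot)$ and (if needed) the $(\varPhi_\lambda)$/$(\varPhi_\mu)$ conditions from Remark \ref{rem1.1} let one pass between $\varPhi^-$ and $\varPhi^+$ without picking up uncontrolled constants. A second, more routine point is the justification of the limit $h\to0$ in the Steklov averages: one uses $u\in C_{\mathrm{loc}}(0,T;L^2_{\mathrm{loc}})\cap L^q_{\mathrm{loc}}(0,T;W^{1,q}_{\mathrm{loc}})$ and the weak continuity in $t$ to pass to the limit in the boundary term at $s$ (after possibly taking $s$ to be a Lebesgue point, then using lower semicontinuity to recover the $\sup$ over $t$ on the left-hand side), and lower semicontinuity of the convex integrand $\varPhi$ to keep the good gradient term. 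Finally I would take the supremum over $s\in(\bar t-\theta,\bar t)$ on the first term and combine with the gradient term to obtain \eqref{eq2.1}.
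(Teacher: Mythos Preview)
Your proposal is correct and follows essentially the same approach as the paper: test \eqref{eq1.10} with $\eta=(u_h-k)_\pm\zeta^q$, integrate in time, integrate by parts in the time-derivative term, apply the structure conditions \eqref{eq1.2} together with Young's inequality (separately on the $p$- and $q$-parts), and let $h\to0$. The only superfluous remark is your mention of the $(\varPhi_\lambda)/(\varPhi_\mu)$ conditions --- they are not needed here, since the pointwise bound $\varPhi(x,t,\cdot)\leqslant\varPhi^+_{Q_{r,\theta}}(\cdot)$ is just the definition of $\varPhi^+$.
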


\begin{proof}Test identity \eqref{eq1.10} by $\eta=(u_{h}-k)_{\pm}\zeta^{q}$, integrating it over $(\bar{t}-\theta,t),t\in (\bar{t}-\theta,\bar{t})$
and then integrating by parts in the term containing $\dfrac{\partial u_{h}}{\partial t}$. Letting $h \rightarrow 0$, using conditions \eqref{eq1.2} and the Young inequality, we arrive at  the required inequality \eqref{eq2.1}, which completes the proof of the lemma.
\end{proof}

The following lemma will be used in the sequel.
\begin{lemma}\label{lem2.3}
 Let u be a bounded non-negative weak solution to Eq. \eqref{eq1.1} in $\Omega_{T}$.  Suppose  that for some $ Q^{-}_{4r, 4r}(\bar{x}, \bar{t})\subset \Omega_{T}$
\begin{equation}\label{eq2.2}
|\{B_{r}(\bar{x}) : u(\cdot,\bar{t})\leqslant  N\}| \leqslant (1-\alpha_{0}) |B_{r}(\bar{x})|,
\end{equation}
for some $0<N<M$ and some $\alpha_{0} \in (0,1)$. Then there exist numbers $\varepsilon_{0}$, $\delta_{0}$ depending only on the known data and $\alpha_{0}$ such that for all $t \in (\bar{t}, \bar{t} +\bar{\theta})$
\begin{equation}\label{eq2.3}
|\{B_{r}(\bar{x}) : u(\cdot,t) \leqslant  \varepsilon_{0} N \}| \leqslant \left(1- \frac{\alpha^{2}_{0}}{2}\right) |B_{r}(\bar{x})|,
\end{equation}
\begin{equation}\label{eq2.4}
\bar{\theta} =\frac{\delta_{0} r^{2}}{\psi^{+}_{Q_{4r, 4r}(\bar{x},\bar{t})}(\frac{N}{r})},\quad \psi^{+}_{Q_{4r, 4r}(\bar{x},\bar{t})}(v):= \frac{1}{v^{2}} \varPhi^{+}_{Q_{4r, 4r}(\bar{x},\bar{t})}(v),
\end{equation}
provided that $\bar{\theta} \leqslant 4 r$.

\end{lemma}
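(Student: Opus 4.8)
The statement is a standard "shrinking lemma" / De Giorgi-type propagation of a measure-theoretic positivity set forward in time, adapted to the $\varPhi$-structure. The plan is to use the energy estimate of Lemma \ref{lem2.2} applied to the truncation $(u-k)_{-}$ with the level $k=N$ on the cylinder $Q^{-}_{r,\bar\theta}(\bar x,\bar t+\bar\theta)$ (equivalently, starting from the time slice $\bar t$), together with a cutoff $\zeta$ that is supported in space in $B_r(\bar x)$ and equals $1$ on a slightly smaller ball; by choosing $\bar\theta$ small enough the bad "initial data" term $\int_{B_r}(u-N)_{-}^2\zeta^q(x,\bar t)\,dx$ will be controlled by the hypothesis \eqref{eq2.2}, and the remaining terms on the right will be $O(\delta_0)$ relative to the left.

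First I would write the energy estimate for $w:=(u-N)_{-}$. On the left we keep the $\sup_t\int_{B_r}w^2\zeta^q\,dx$ term; on the right the first term is bounded using \eqref{eq2.2}: since $w\le N$ pointwise and $w>0$ only where $u(\cdot,\bar t)\le N$, we get $\int_{B_r}w^2\zeta^q(x,\bar t)\,dx\le N^2|\{u(\cdot,\bar t)\le N\}\cap B_r|\le N^2(1-\alpha_0)|B_r|$. The $\zeta_t$ term can be made to vanish or be absorbed by choosing $\zeta$ independent of $t$ on the relevant slice (or by paying a factor $\bar\theta/(\sigma r\cdot \text{time scale})$, which is $O(\delta_0)$ by the choice of $\bar\theta$). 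The last term is $\frac{\gamma}{\sigma^q}\varPhi^{+}_{Q_{r,\bar\theta}}(M_{-}(N,r,\bar\theta)/r)|Q_{r,\bar\theta}\cap\{w>0\}|$; since $M_{-}(N,r,\bar\theta)\le N$ and $\varPhi^{+}(N/r)=N^2\psi^{+}(N/r)$, this is bounded by $\gamma\sigma^{-q}\,N^2\psi^{+}_{Q_{4r,4r}}(N/r)\,\bar\theta\,|B_r|=\gamma\sigma^{-q}\delta_0 N^2|B_r|$ by the definition \eqref{eq2.4} of $\bar\theta$ (using monotonicity of $\varPhi^+$ and $Q_{r,\bar\theta}\subset Q_{4r,4r}$, where $\bar\theta\le 4r$ is exactly why we need that side condition). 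So for every $t\in(\bar t,\bar t+\bar\theta)$,
\begin{equation*}
\int_{B_r(\bar x)}(u(\cdot,t)-N)_{-}^{2}\zeta^{q}\,dx\le \big((1-\alpha_0)+\gamma\sigma^{-q}\delta_0\big)N^{2}|B_r(\bar x)|.
\end{equation*}

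Next I would convert this $L^2$ bound into the desired measure estimate \eqref{eq2.3} by a Chebyshev-type argument. Fix $t$ in the time interval and restrict attention to the slightly smaller ball $B_{(1-\sigma)r}$ where $\zeta=1$; the spherical shell $B_r\setminus B_{(1-\sigma)r}$ has measure $\le \gamma\sigma|B_r|$, which is negligible for small $\sigma$. On the set where $u(\cdot,t)\le\varepsilon_0 N$ we have $(u-N)_{-}\ge(1-\varepsilon_0)N$, so
\begin{equation*}
(1-\varepsilon_0)^2 N^2\,|\{B_{(1-\sigma)r}:u(\cdot,t)\le\varepsilon_0 N\}|\le \int_{B_r}(u-N)_{-}^2\zeta^q\,dx\le\big((1-\alpha_0)+\gamma\sigma^{-q}\delta_0\big)N^2|B_r|.
\end{equation*}
Hence $|\{B_r:u(\cdot,t)\le\varepsilon_0 N\}|\le\big[(1-\alpha_0)(1-\varepsilon_0)^{-2}+\gamma\sigma^{-q}\delta_0(1-\varepsilon_0)^{-2}+\gamma\sigma\big]|B_r|$. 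Now choose the free parameters in order: first $\sigma=\sigma(\alpha_0)$ small enough that $\gamma\sigma\le\alpha_0^2/8$; then $\delta_0=\delta_0(\alpha_0,\sigma)=\delta_0(\alpha_0,\text{data})$ small enough that $\gamma\sigma^{-q}\delta_0(1-\varepsilon_0)^{-2}\le\alpha_0^2/8$ (with $\varepsilon_0\le 1/2$ fixed first, say); and finally $\varepsilon_0=\varepsilon_0(\alpha_0)$ close enough to $0$ that $(1-\alpha_0)(1-\varepsilon_0)^{-2}\le 1-\alpha_0+\alpha_0^2/4\le 1-\alpha_0^2/4$ (using $\alpha_0<1$). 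Adding up gives $1-\alpha_0^2/4+\alpha_0^2/8+\alpha_0^2/8=1-\alpha_0^2/... $ wait — I would instead pick the budget so the three error terms each sit below $\alpha_0^2/6$ and land at $1-\alpha_0^2/2$; the bookkeeping is routine once the order of choices is fixed.

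The main obstacle, and the only genuinely delicate point, is the treatment of the $\zeta_t$ term and the choice of cutoff on the correct cylinder. Because Lemma \ref{lem2.2} is stated for cylinders of the form $Q^{-}_{r,\theta}(\bar x,\bar t)$ with initial slice at the bottom, one must apply it on $Q^{-}_{r,\bar\theta}(\bar x,\bar t+\bar\theta)$ so that its bottom slice is exactly $\{\bar t\}$, where hypothesis \eqref{eq2.2} lives; then $\zeta$ should be taken independent of $t$ (so $\zeta_t\equiv 0$) and piecewise-linear in $|x-\bar x|$ between $B_{(1-\sigma)r}$ and $B_r$, giving $|\nabla\zeta|\le\frac1{\sigma r}$ as required. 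The other subtlety is that the $\varPhi^+$ in the last term of \eqref{eq2.1} is taken over $Q_{r,\bar\theta}$ while $\psi^+$ in \eqref{eq2.4} is over $Q_{4r,4r}$; one needs $Q_{r,\bar\theta}\subseteq Q_{4r,4r}$, i.e. $\bar\theta\le 4r$, which is precisely the stated proviso, and monotonicity of $v\mapsto\varPhi^+(v)$ to compare the two. With these points handled, the rest is the classical Chebyshev-plus-parameter-chase described above.
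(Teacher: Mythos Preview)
Your proposal is correct and follows essentially the same route as the paper: apply the energy estimate of Lemma~\ref{lem2.2} to $(u-N)_{-}$ with a time-independent spatial cutoff $\zeta$ (so the $\zeta_{t}$ term drops), bound the initial term by \eqref{eq2.2} and the $\varPhi^{+}$ term by the definition of $\bar{\theta}$, then run Chebyshev on $B_{(1-\sigma)r}$ and absorb the shell. The paper's parameter choice is slightly cleaner than yours---it takes $\varepsilon_{0}$ so that $(1-\varepsilon_{0})^{-2}\leqslant 1+\alpha_{0}$, giving $(1-\alpha_{0})(1+\alpha_{0})=1-\alpha_{0}^{2}$ and leaving exactly $\alpha_{0}^{2}/2$ of room for the $\sigma$ and $\delta_{0}$ errors---but your bookkeeping would close as well.
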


\begin{proof}We use Lemma \ref{lem2.2} in the cylinder $Q^{+}_{r,\bar{\theta}}(\bar{x},\bar{t})$ and $\zeta \in C^{\infty}_{0}(B_{r}(\bar{x}))$,
 $0\leqslant \zeta \leqslant 1$,  $\zeta(x) =1$ in $B_{(1-\sigma)r}(\bar{x})$,  $|\nabla \zeta|\leqslant \dfrac{1}{\sigma r}$, where $\sigma \in (0,1)$ will be fixed later. By Lemma \ref{lem2.2} and \eqref{eq2.2} it follows that
\begin{multline*}
\int\limits_{B_{(1-\sigma)r}(\overline{x})\times\{t\}}(N-u)_{+}^2dx \leqslant  N^{2}\big|\big\{B_{r}(x_{0}) :u(\cdot,\bar{t})\leqslant N \big\}\big|+\\+\gamma \sigma^{-q}\big\{r^{-p} N^{p}+\max\limits_{Q_{4r, 4r}(\bar{x},\bar{t})}a(x,t)r^{-q} N^{q}\big\} \bar{\theta}\big|B_{r}(\bar{x})\big|\leqslant\\
 \leqslant N^{2}\big\{1-\alpha_{0}+\gamma \sigma^{-q}\delta_{0}\big\}\big|B_{r}(\bar{x})\big|.
\end{multline*}
We infer from this that for all $t\in (\bar{t},\bar{t}+\bar{\theta})$

$$\big|\big\{B_{r}(\bar{x}) : u(\cdot,t) \leqslant \varepsilon_{0} N \big\}\big| \leqslant\bigg(n\sigma+\frac{1-\alpha_{0}}{(1-\varepsilon_{0})^{2}}+\frac{\gamma\sigma^{-q}\delta_{0}}{(1-\varepsilon_{0})^{2}}\bigg) \big|B_{r}(\bar{x})\big| .$$
Choosing $\sigma$ such that $n \sigma\leqslant \frac{1}{4}\alpha_{0}^{2}$, and $\varepsilon_{0}$ such that $ \dfrac{1}{(1-\varepsilon_{0})^{2}}\leqslant 1+\alpha_{0}$, and finally, choosing $\delta_{0}$ such that $\delta_{0} \gamma \sigma^{-q} (1+\alpha_{0})\leqslant \frac{1}{4}\alpha_{0}^{2}$, we arrive at the required \eqref{eq2.3}, which completes the proof of the lemma.
\end{proof}

\textbf{2.3. De Giorgi type lemmas}

The next lemmas will be used in the sequel and they are a consequence of the Sobolev embedding theorem and Lemma \ref{lem2.2}.

\begin{lemma}\label{lem2.4}
 Let u be a bounded non-negative weak solution to Eq. \eqref{eq1.1} in $\Omega_{T}$. Let $(\bar{x},\bar{t})$ be some point in $\Omega_{T}$
such that  $Q_{r, \theta}(\bar{x},\bar{t}) \subset Q_{4r, 4r}(\bar{x}, \bar{t}) \subset \Omega_{T}$. Fix $\xi_{0} \in (0,1)$ and $N\in (0,M)$, then there exists number  $\nu \in (0,1)$ depending only on the data and $\xi_{0}$, $r$, $\theta$, $N$ such that if
\begin{equation}\label{eq2.5}
|\{ Q^{-}_{r,\theta}(\bar{x},\bar{t}) : u \leqslant N \}| \leqslant \nu [\mu(4r)]^{-n} |Q^{-}_{r,\theta}(\bar{x},\bar{t})|,
\end{equation}
then
\begin{equation}\label{eq2.6}
u(x,t) \geqslant \xi_{0} N,\quad \text{for a.a.}\quad (x,t) \in Q^{-}_{\frac{r}{2},\frac{\theta}{2}}(\bar{x},\bar{t}).
\end{equation}

Likewise, assume that  with some $\gamma_{0} >0$
\begin{equation}\label{eq2.7}
\bigg(\frac{N}{r}\bigg)^{q-p}\max\limits_{Q_{4r, 4r}(\bar{x}, \bar{t})}a(x, t) \leqslant \gamma_{0},
\end{equation}
then there exists number $\nu\in (0,1)$ depending only on the data and $\xi_{0}$, $r$, $\theta$, $N$, $\gamma_{0}$ such that if
\begin{equation}\label{eq2.8}
|\{ Q^{-}_{r,\theta}(\bar{x},\bar{t}) : u \leqslant N  \}| \leqslant \nu  |Q^{-}_{r,\theta}(\bar{x},\bar{t})|,
\end{equation}
then
\begin{equation}\label{eq2.9}
u(x,t) \geqslant \xi_{0} N ,\quad \text{for a.a.} (x,t) \in Q^{-}_{\frac{r}{2},\frac{\theta}{2}}(\bar{x},\bar{t}).
\end{equation}

\end{lemma}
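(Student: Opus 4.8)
The plan is a De~Giorgi iteration over a shrinking family of cylinders, tested against the negative truncations of $u$, with the $(p,q)$-growth managed through the structural inequality \eqref{eq1.3} (equivalently, conditions $(\varPhi_{\lambda})$, $(\varPhi_{\mu})$ of Remark~\ref{rem1.1}) in the first part, and through \eqref{eq2.7} directly in the second. Fix levels $k_{j}:=\xi_{0}N+(1-\xi_{0})N\,2^{-j}$ (so $k_{0}=N$, $k_{j}\downarrow\xi_{0}N$), radii $r_{j}:=\tfrac{r}{2}(1+2^{-j})$ and half-heights $\theta_{j}:=\tfrac{\theta}{2}(1+2^{-j})$, so that $Q_{j}:=Q^{-}_{r_{j},\theta_{j}}(\bar{x},\bar{t})$ decreases from $Q^{-}_{r,\theta}(\bar{x},\bar{t})$ to $\bigcap_{j}Q_{j}=Q^{-}_{r/2,\theta/2}(\bar{x},\bar{t})$. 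Choose cut-offs $\zeta_{j}$ vanishing on the lateral boundary and at the bottom of $Q_{j}$, equal to $1$ on $Q_{j+1}$, with $|\nabla\zeta_{j}|\leqslant\gamma 2^{j}/r$, $|\partial_{t}\zeta_{j}|\leqslant\gamma 2^{j}/\theta$. Write $w_{j}:=(u-k_{j})_{-}$ and $A_{j}:=|Q_{j}\cap\{u<k_{j}\}|$. Since $0\leqslant w_{j}\leqslant k_{j}\leqslant N$ and $M_{-}(k_{j},r_{j},\theta_{j})\leqslant N$, in the energy estimate \eqref{eq2.1} of Lemma~\ref{lem2.2} (applied on $Q_{j}$ with $\zeta=\zeta_{j}$) the initial term drops, the time term is bounded by $\gamma 2^{j}\theta^{-1}N^{2}A_{j}$, and the last term by $\gamma 2^{jq}\varPhi^{+}_{Q_{4r,4r}(\bar{x},\bar{t})}(N/r)A_{j}$.

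The crucial step is the treatment of $\varPhi^{+}_{Q_{4r,4r}}(N/r)$. In the first part, \eqref{eq1.3} together with $N\leqslant M$ gives
\[
\varPhi^{+}_{Q_{4r,4r}(\bar{x},\bar{t})}(N/r)\;\leqslant\;\varPhi^{-}_{Q_{4r,4r}(\bar{x},\bar{t})}(N/r)+A\,\mu(4r)(4r)^{q-p}(N/r)^{q}\;\leqslant\;\gamma(A,M)\,\mu(4r)\,\varPhi^{-}_{Q_{4r,4r}(\bar{x},\bar{t})}(N/r),
\]
so a single factor $\mu(4r)$ is lost; in the second part \eqref{eq2.7} yields at once $\varPhi^{+}_{Q_{4r,4r}}(N/r)=(N/r)^{p}+\max_{Q_{4r,4r}}a\,(N/r)^{q}\leqslant(1+\gamma_{0})\varPhi^{-}_{Q_{4r,4r}}(N/r)$ with no loss. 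Hence \eqref{eq2.1} becomes
\[
\sup_{\bar{t}-\theta_{j}<t<\bar{t}}\int_{B_{r_{j}}(\bar{x})}w_{j}^{2}\zeta_{j}^{q}\,dx+\gamma^{-1}\iint_{Q_{j}}\varPhi(x,t,|\nabla w_{j}|)\zeta_{j}^{q}\,dx\,dt\;\leqslant\;\gamma\,2^{jq}\,\mathcal{H}\,A_{j},
\]
with $\mathcal{H}:=N^{2}\theta^{-1}+N^{p}r^{-p}+\mathcal{E}$, where $\mathcal{E}=\mu(4r)\,\varPhi^{-}_{Q_{4r,4r}}(N/r)$ in the first part and $\mathcal{E}=\varPhi^{-}_{Q_{4r,4r}}(N/r)$ in the second.

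Now I apply the parabolic Sobolev inequality to $w_{j}\zeta_{j}^{q/p}$ (the auxiliary power $q/p$ reconciles the $\zeta^{q}$ produced by \eqref{eq2.1} with the power required by the embedding), keeping on the coercivity side whichever of $|\nabla w_{j}|^{p}$ or $\min_{Q_{4r,4r}}a\,|\nabla w_{j}|^{q}$ matches the size of $\varPhi^{-}(N/r)$; note that by \eqref{eq1.3} the coefficient $a$ varies by at most $A\mu(4r)(4r)^{q-p}$ on $Q_{4r,4r}$, so in the $q$-dominated regime $a$ is comparable to a constant there and no genuine $a$-dependence enters. Combining this with Chebyshev's inequality (on $\{u<k_{j+1}\}$ one has $w_{j}>(1-\xi_{0})N2^{-j-1}$) I obtain a recursion
\[
A_{j+1}\;\leqslant\;\gamma\,\mathfrak{B}^{\,j}\,\mathcal{C}\,A_{j}^{\,1+\kappa},\qquad\kappa>0\ \text{depending only on}\ n,p,q,
\]
where $\mathfrak{B}$ depends only on $n,p,q$ and $\mathcal{C}$ is an explicit expression in $\mathcal{H}$, $N$, $r$, $\theta$, $\xi_{0}$ (and $\mu(4r)$ in the first part, $\gamma_{0}$ in the second). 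By the fast geometric convergence lemma (\cite{DiBenedettoDegParEq}, Ch.~I) $A_{j}\to0$ as soon as $A_{0}\leqslant(\gamma\mathcal{C})^{-1/\kappa}\mathfrak{B}^{-1/\kappa^{2}}$; since $Q^{-}_{r/2,\theta/2}(\bar{x},\bar{t})\cap\{u<\xi_{0}N\}\subset Q_{j}\cap\{u<k_{j}\}$ for every $j$, this gives \eqref{eq2.6} (respectively \eqref{eq2.9}). Finally one reads off $\nu$: in the first part the weight $[\mu(4r)]^{-n}$ in \eqref{eq2.5} is precisely what converts $A_{0}\leqslant\nu[\mu(4r)]^{-n}|Q^{-}_{r,\theta}|$ into the smallness threshold above, with $\nu$ depending only on the data, $\xi_{0}$, $r$, $\theta$, $N$; in the second part \eqref{eq2.7} removes the $\mu$-loss, so the plain bound \eqref{eq2.8} suffices, with $\nu$ additionally depending on $\gamma_{0}$.

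The step I expect to be the main obstacle is exactly this last bookkeeping: one uses only the $p$-part (or the essentially-constant-$a$ $q$-part) of the diffusion on the coercivity side, while the source term is genuinely of size $\mu(4r)$ times the natural scale, so one must check that after the geometric iteration the loss is a power $\mu(4r)^{n}$ and no more, which is what makes the weighted measure condition \eqref{eq2.5} the correct hypothesis. Everything else — the energy estimate, the parabolic embedding, the Chebyshev reduction, the fast convergence lemma, and the reconciliation of the cut-off powers $\zeta^{q}$ versus $\zeta^{p}$ — is standard in DiBenedetto's intrinsic-scaling framework.
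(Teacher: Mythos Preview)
Your overall scheme (De Giorgi iteration with the levels $k_{j}=\xi_{0}N+(1-\xi_{0})N2^{-j}$, the energy estimate \eqref{eq2.1}, parabolic Sobolev, and fast geometric convergence) coincides with the paper's, but the core analytic step is handled differently, and your version of it contains a slip.

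The paper does \emph{not} apply the parabolic embedding to $w_{j}=(u-k_{j})_{-}$ itself. Instead it applies the $W^{1,1}$-type embedding to the composed function $\varPhi^{-}_{Q_{4r,4r}}\big(w_{j}/r\big)$: one estimates $\big|\nabla \varPhi^{-}(w_{j}/r)\big|\leqslant\tfrac{\gamma}{r}\,\varphi(x,t,w_{j}/r)\,|\nabla w_{j}|$, then Young's inequality turns this into $\tfrac{\gamma}{r}\big(\varPhi(x,t,w_{j}/r)+\varPhi(x,t,|\nabla w_{j}|)\big)$, both pieces controlled by \eqref{eq2.1}. The embedding thus yields (see \eqref{eq2.12}) a lower bound of the form $\big((1-\xi_{0})N2^{-j-1}\big)^{2/n}\varPhi^{-}\big((1-\xi_{0})N/(2^{j+1}r)\big)\,A_{j+1}$ against $[\varPhi^{+}(N/r)]^{1+1/n}A_{j}^{1+1/n}$, so a \emph{single} use of $(\varPhi_{\mu})$ produces one factor $\mu(4r)$ in the recursion with exponent gain $\kappa=1/n$; fast convergence then gives the threshold $\mu(4r)^{-n}$, matching \eqref{eq2.5} exactly. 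No phase distinction is needed.

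Your route --- embedding applied to $w_{j}$ with a case split into a ``$p$-dominated'' and ``$q$-dominated'' regime --- is shakier at the point you yourself flag. The claim that in the $q$-dominated regime ``$a$ is comparable to a constant'' is not correct: \eqref{eq1.3} only gives $a^{+}\leqslant a^{-}+A\mu(4r)(4r)^{q-p}$, so $a^{+}/a^{-}\leqslant 1+\gamma\,\mu(4r)$, which is \emph{not} bounded independently of $r$. Moreover, with the $W^{1,p}$ (or $W^{1,q}$) embedding the sup-in-time factor and the gradient factor each carry $\mathcal{H}\lesssim\mu(4r)\varPhi^{-}(N/r)$, so the recursion picks up $\mu(4r)^{1+p/n}$ with $\kappa=p/n$, and the resulting threshold is $\mu(4r)^{-(n/p+1)}$, not $\mu(4r)^{-n}$. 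In the degenerate range $p>n/(n-1)$ this still implies the lemma (since $n/p+1\leqslant n$ and $\mu\geqslant1$), but it is not the ``$\mu^{n}$ and no more'' you asserted, and it does not cover the full parameter range cleanly. For the second part, under \eqref{eq2.7}, your reduction to the pure $p$-growth estimate is exactly what the paper does and is fine.
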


\begin{proof}For $j=0,1,2, \ldots,$ we define the sequences $r_{j}:=\dfrac{r}{2}(1+2^{-j}), \,\, \theta_{j}:=\dfrac{\theta}{2}(1+2^{-j}), \,\,
\overline{r}_{j}:=\dfrac{r_{j}+r_{j+1}}{2}, \quad
\overline{\theta}_{j}:=\dfrac{\theta_{j}+\theta_{j+1}}{2},
B_{j}:=B_{r_{j}}(x_{0}), \quad \overline{B}_{j}:=B_{\overline{r}_{j}}(x_{0}),
\quad Q_{j}:=Q^{-}_{r_{j}, \theta_{j}}(x_{0},\overline{t}), \,\,
\overline{Q}_{j}:=Q^{-}_{\overline{r}_{j}, \overline{\theta}_{j}}(x_{0},\overline{t}),\quad
k_{j}:=\xi_{0} N + (1-\xi_{0}) \frac{N}{2^{j}},
\quad A_{j,k_{j}}:= Q_{j}\cap \{u<k_{j}\}, \quad
\overline{A}_{j,k_{j}}:= \overline{Q}_{j}\cap \{u<k_{j}\}.$
Let $\zeta_{j}\in C_{0}^{\infty}(\overline{B}_{j})$, $0\leqslant\zeta_{j}\leqslant1$,
$\zeta_{j}=1$ in $B_{j+1}$ and $|\nabla \zeta_{j}|\leqslant \gamma2^{j}/r$.
Consider also the function $\chi_{j}(t)=1$ for $t\geqslant \overline{t}-\theta_{j+1}$,
$\chi_{j}(t)=0$ for $t< \overline{t}-\theta_{j}$, $0\leqslant\chi_{j}(t)\leqslant1$
and $|\chi'_{j}|\leqslant \gamma2^{j}/\theta$.

Lemma \ref{lem2.2} with such choices implies that
\begin{multline}\label{eq2.10}
\sup\limits_{\bar{t}-\theta_{j}<t<\bar{t}}\int\limits_{B_{j}} (u-k_{j})^{2}_{-}\zeta^{q}_{j}\chi^{q}_{j} dx
+\iint\limits_{Q_{j}} \varPhi(x,t,|\nabla(u-k_{j})_{-}|)\zeta^{q}_{j}\chi^{q}_{j} dx dt \leqslant\\
\leqslant \gamma 2^{j\gamma}\bigg(\theta^{-1} k^{2}_{j} +r^{-2}k_{j}^{2}\psi^{+}_{Q_{4r,4r}(\bar{x},\bar{t})}\bigg(\frac{k_{j}}{r}\bigg)\bigg)|A_{j,k_{j}}|\leqslant \\ \leqslant \gamma 2^{j\gamma}\varPhi^{+}_{Q_{4r,4r}(\bar{x},\bar{t})}\bigg(\frac{N}{r}\bigg)\bigg(1+\frac{r^{2}}
{\theta \psi^{+}_{Q_{4r,4r}(\bar{x},\bar{t})}\big(\frac{N}{r}\big)}\bigg)|A_{j,k_{j}}|,
\end{multline}
where $\psi^{+}_{Q_{4r,4r}(\bar{x},\bar{t})}(\frac{N}{r})$ was defined in \eqref{eq2.4}.

By the Young inequality and \eqref{eq2.10} we have
$$
\iint\limits_{Q_{j}} |\nabla \varPhi^{-}_{Q_{4r,4r}(\bar{x},\bar{t})}\bigg(\frac{(u-k_{j})_{-}}{r}\bigg)|\zeta_{j}^{q}\chi_{j}^{q} dxdt \leqslant
 $$
 $$
 \leqslant  \frac{\gamma}{r} \iint\limits_{Q_{j}} \varphi^{-}_{Q_{4r,4r}(\bar{x},\bar{t})}\bigg(\frac{(u-k_{j})_{-}}{r}\bigg) |\nabla(u-k_{j})_{-}| \zeta_{j}^{q} \chi_{j}^{q} dxdt\leqslant
 $$
\begin{multline}\label{eq2.11}
\leqslant  \frac{\gamma}{r}\iint\limits_{Q_{j}} \varphi\bigg(x,t,\frac{(u-k_{j})_{-}}{r}\bigg) |\nabla(u-k_{j})_{-}| \zeta_{j}^{q} \chi_{j}^{q} dxdt\leqslant\\ \leqslant  \frac{\gamma}{r}\iint\limits_{Q_{j}}\varPhi\bigg(x,t,\frac{(u-k_{j})_{-}}{r}\bigg)\zeta_{j}^{q}\chi_{j}^{q}dxdt +  \frac{\gamma}{r}\iint\limits_{Q_{j}} \varPhi(x,t,|\nabla(u-k_{j})_{-}|)\zeta^{q}_{j}\times\\
\times\negthickspace\chi^{q}_{j} dx dt  \negthickspace\leqslant \negthickspace \gamma
\frac{2^{j\gamma}}{r}\varPhi^{+}_{Q_{4r,4r}(\bar{x},\bar{t})}\bigg(\frac{N}{r}\bigg)\left(1+\frac{r^{2}}
{\theta \psi^{+}_{Q_{4r,4r}(\bar{x},\bar{t})}\big(\frac{N}{r}\big)}\right)|A_{j,k_{j}}|.
\end{multline}
By \eqref{eq2.10}, \eqref{eq2.11}, using the Sobolev embedding theorem and H\"{o}lder's inequality, we obtain
\begin{multline}\label{eq2.12}
\bigg(\frac{(1-\xi_{0})N}{2^{j+1}}\bigg)^{\frac{2}{n}}\varPhi^{-}_{Q_{4r,4r}(\bar{x},\bar{t})}\bigg(\frac{(1-\xi_{0})N}{2^{j+1}r}\bigg) |A_{j+1,k_{j+1}}| \leqslant \\
\leqslant\iint\limits_{Q_{j}}(u-k_{j})_{-}^{\frac{2}{n}}\varPhi^{-}_{Q_{4r,4r}(\bar{x},\bar{t})}
\bigg(\frac{(u-k_{j})_{-}}{r}\bigg)(\zeta_{j}\chi_{j})^{1+\frac{1}{n}} dxdt \leqslant \\
\leqslant \gamma \bigg(\sup\limits_{\bar{t}-\theta_{j}<t<\bar{t}}\int\limits_{B_{j}} (u-k_{j})^{2}_{-}\zeta^{q}_{j}\chi^{q}_{j} dx\bigg)^{\frac{1}{n}}\times\\
\times\iint\limits_{Q_{j}} \left|\nabla \left(\varPhi^{-}_{Q_{4r,4r}(\bar{x},\bar{t})}\bigg(\frac{(u-k_{j})_{-}}{r}\bigg)\zeta_{j}^{q}\chi_{j}^{q}\right)\right|dxdt\leqslant\\
\leqslant\negthickspace\gamma \frac{2^{j\gamma}}{r}\bigg[\varPhi^{+}_{Q_{4r,4r}\negthickspace(\bar{x},\bar{t})}\bigg(\frac{N}{r}\bigg)\bigg]^{1+
\frac{1}{n}}\negthickspace\negthickspace\left(\negthickspace1\negthickspace+\negthickspace\frac{r^{2}}{\theta \psi^{+}_{Q_{4r,4r}(\bar{x},\bar{t})}\big(\frac{N}{r}\big)}\right)^{1+\frac{1}{n}}\negthickspace\negthickspace|A_{j,k_{j}}|^{1+\frac{1}{n}},
\end{multline}
which by $(\varPhi_{\mu})$ condition yields
\begin{equation*}
y_{j+1}:=\frac{|A_{j+1,k_{j+1}}|}{|Q_{j+1}|} \leqslant \gamma 2^{j\gamma} \mu(4r)(1-\xi_{0})^{-q-\frac{2}{n}}\bigg[\psi^{+}_{Q_{4r,4r}(\bar{x},\bar{t})}\bigg(\frac{N}{r}\bigg)\frac{\theta}{r^{2}}\bigg]^{\frac{1}{n}}\times
\end{equation*}
\begin{equation*}
\times\bigg(1+\frac{r^{2}}{\theta \psi^{+}_{Q_{4r,4r}(\bar{x},\bar{t})}\big(\frac{N}{r}\big)}\bigg)^{1+\frac{1}{n}}y_{j}^{1+\frac{1}{n}}.
\end{equation*}
From this, by iteration, it follows that $\lim\limits_{j \rightarrow +\infty} |A_{j,k_{j}}|=0$, provided that $\nu$ is chosen to satisfy
\begin{equation}\label{eq2.13}
\nu= \gamma^{-1} (1-\xi_{0})^{nq+2}\frac{r^{2}}{\theta\psi^{+}_{Q_{4r,4r}(\bar{x},\bar{t})}\big(\frac{N}{r}\big) } \bigg(1+\frac{r^{2}}{\theta \psi^{+}_{Q_{4r,4r}(\bar{x},\bar{t})}\big(\frac{N}{r}\big)}\bigg)^{-n-1},
\end{equation}
which proves \eqref{eq2.6}.

To prove \eqref{eq2.9} we choose $k_{j}:=\xi_{0} N  + (1-\xi_{0}) \dfrac{N}{2^{j}}$, by condition \eqref{eq2.7}
\begin{equation*}
\frac{1}{r^{q}}\max\limits_{Q_{4r, 4r}(\bar{x},\bar{t})}a(x,t) (u-k_{j})_{-}^{q} \leqslant \frac{\gamma_{0}}{r^{p}}(u-k_{j})_{-}^{p}
\end{equation*}
and
\begin{equation*}
\bigg(\frac{N}{r}\bigg)^{p-2}\leqslant \psi^{+}_{Q_{r,r}(\bar{x},\bar{t})}\bigg(\frac{N}{r}\bigg)\leqslant (1+\gamma_{0})\bigg(\frac{N}{r}\bigg)^{p-2}.
\end{equation*}

Therefore inequalities \eqref{eq2.10}-\eqref{eq2.12} can be rewritten as follows:

\begin{multline*}
\sup\limits_{\bar{t}-\theta_{j}<t<\bar{t}}\int\limits_{B_{j}} (u-k_{j})^{2}_{-}\zeta^{q}_{j}\chi^{q}_{j} dx
+\iint\limits_{Q_{j}} |\nabla(u-k_{j})_{-}|^{p}\zeta^{q}_{j}\chi^{q}_{j} dx dt \leqslant\\
\leqslant \gamma 2^{j\gamma}\bigg(\frac{N}{r}\bigg)^{p}\bigg(1+\frac{r^{p}}{\theta N^{p-2}}\bigg)|A_{j,k_{j}}|,
\end{multline*}

and
\begin{multline*}
\bigg(\frac{(1-\xi_{0})N}{2^{j+1}}\bigg)^{p+\frac{2}{n}} |A_{j+1,k_{j+1}}| \leqslant \\ \leqslant \gamma 2^{j\gamma} r^{p-1}
\bigg(\frac{N}{r}\bigg)^{p(1+\frac{1}{n})}\bigg(1+\frac{r^{p}}{\theta N^{p-2}}\bigg)^{1+\frac{1}{n}}|A_{j,k_{j}}|^{1+\frac{1}{n}},
\end{multline*}
from which it follows that
\begin{equation*}
y_{j+1}\negthickspace:=\frac{|A_{j+1,k_{j+1}}|}{|Q_{j+1}|} \negthickspace\leqslant\negthickspace \gamma 2^{j\gamma}(1-\xi_{0})^{-p-\frac{2}{n}}\bigg(\frac{\theta N^{p-2}}{r^{p}}\bigg)^{\frac{1}{n}}\bigg(1+\frac{r^{p}}{\theta N^{p-2}}\bigg)^{1+\frac{1}{n}} y_{j}^{1+\frac{1}{n}},
\end{equation*}
which yields $\lim\limits_{j \rightarrow +\infty} |A_{j,k_{j}}|=0$, provided that $\nu$ is chosen to satisfy
\begin{equation}\label{eq2.14}
\nu= \gamma^{-1} (1-\xi_{0})^{np+2}\frac{r^{p}}{\theta N^{p-2}} \bigg(1+\frac{r^{p}}{\theta N^{p-2} }\bigg)^{-n-1},
\end{equation}
which proves \eqref{eq2.9}. This completes the proof of the lemma.

\end{proof}

\section{Integral and pointwise estimates of auxiliary solutions}\label{Sect3}

Fix $(x_{0}, t_{0})\in \Omega_{T}$ such that $a(x_{0},t_{0})=0$
and let $(\bar{x},\bar{t}) \in Q_{\rho,\rho}(x_{0},t_{0})\subset Q_{8\rho,8\rho}(x_{0},t_{0}) \subset \Omega_{T}$. Let $0<r\leqslant \frac{1}{2}\rho$ , $E\subset B_{r}(\bar{x})$,  $\mid E \mid>0$,  $0<N\leqslant M$, and  we also suppose that
\begin{equation}\label{eq3.1}
N \lambda(r) \frac{|E|}{\rho^{n}}\geqslant \rho.
\end{equation}
We will consider separately two cases: $\max\limits_{Q_{8r,8r}(\bar{x},\bar{t})}a(x, t)\leqslant 4A\mu(8r)(8r)^{q-p}$
and  $\max\limits_{Q_{8r, 8r}(\bar{x}, \bar{t})}a(x, t)\geqslant 4A~\mu(8r)(8r)^{q-p}.$ In the case $\max\limits_{Q_{8r,8r}(\bar{x}, \bar{t})}a(x, t)\leqslant 4A~\mu(8r)(8r)^{q-p}$, we consider the function $v(x,t)=v_{r,N}(x,t,\bar{x},\bar{t}) \in C(\bar{t}, \bar{t}+8\tau_{1}; L^{2}(B_{8\rho}(\bar{x}))) \cap L^{q}(\bar{t}, \bar{t}+8\tau_{1}; W_{0}^{1, q}(B_{8\rho}(\bar{x})))$ with $\tau_{1}=\rho^{p}\times\\\times \left(N\lambda(r)\dfrac{|E|}{\rho^{n}}\right)^{2-p}$ as the solution of the following problem

\begin{equation}\label{eq3.2}
 v_{t}-div \mathbb{A}(x, t, \nabla v)=0,\quad(x, t)\in Q_{1}:= B_{8\rho}(\bar{x})\times(\bar{t}, \bar{t}+8\tau_{1}),
\end{equation}

\begin{equation}\label{eq3.3}
 v(x,t)=0,\quad(x,t)\in \partial B_{8\rho}(\bar{x})\times(\bar{t},\bar{t}+8\tau_{1}),
\end{equation}

\begin{equation}\label{eq3.4}
v(x, \bar{t})=N \lambda(r) \chi(E),\quad x\in B_{8\rho}(\bar{x}).
\end{equation}

In addition, the integral identity

\begin{equation}\label{eq3.5}
\int\limits_{B_{8\rho}(\bar{x})\times\{t\}} \left(\frac{\partial v_{h}}{\partial t}\eta + [\mathbb{A}(x, t, \nabla v)]_{h} \nabla \eta\right) dx = 0,
\end{equation}
holds for all $t\in (\bar{t}, \bar{t}+8\tau_{1}-h)$ and for all $\eta\in W^{1, q}_{0}(B_{8\rho}(\bar{x})).$ Here $v_{h}$ is defined similarly to \eqref{eq1.10}.

In the case $\max\limits_{Q_{8r,8r}(\bar{x}, \bar{t})}a(x, t)\geqslant 4A~\mu(8r)(8r)^{q-p}$, by our assumptions there exists $\bar{\rho} \in (0,\rho)$, such that
$$\max\limits_{Q_{4\bar{\rho}, 4\bar{\rho}}(\bar{x},\bar{t})}a(x,t) \geqslant 4A \mu(4\bar{\rho})(4\bar{\rho})^{q-p},\,\,
\max\limits_{Q_{8\bar{\rho}, 8\bar{\rho}}(\bar{x},\bar{t})}a(x,t) \leqslant 4A \mu(8\bar{\rho})(8\bar{\rho})^{q-p}.$$
Let $\rho_{0}$ be the maximal number satisfying the above conditions. We consider the function $w(x,t)=w_{r,N}(x,t,\bar{x},\bar{t}) \in C(\bar{t}, \bar{t}+8\tau_{2}; L^{2}(B_{8\rho_{0}}(\bar{x})))\cap \\ \cap L^{q}(\bar{t}, \bar{t}+8\tau_{2}; W_{0}^{1, q}(B_{8\rho_{0}}(\bar{x}))) , \tau_{2}=\rho_{0}^{p}\left(N\lambda(r)\dfrac{|E|}{\rho_{0}^{n}}\right)^{2-p}$ as the solution of the following problem
\begin{equation}\label{eq3.6}
 w_{t}-div \mathbb{A}(x, t, \nabla w)=0,\quad(x, t)\in Q_{2}:= B_{8\rho_{0}}(\bar{x})\times(\bar{t}, \bar{t}+8\tau_{2}),
\end{equation}
\begin{equation}\label{eq3.7}
 w(x,t)=0,\quad(x,t)\in \partial B_{8\rho_{0}}(\bar{x})\times(\bar{t},\bar{t}+8\tau_{2}),
\end{equation}

\begin{equation}\label{eq3.8}
w(x, \bar{t})=N \lambda(r) \chi(E),\quad x\in B_{8\rho_{0}}(\bar{x}).
\end{equation}

In addition, the integral identity

\begin{equation}\label{eq3.9}
\int\limits_{B_{8\rho_{0}}(\bar{x})\times\{t\}} \left(\frac{\partial w_{h}}{\partial t}\eta + [\mathbb{A}(x, t, \nabla w)]_{h} \nabla \eta\right) dx = 0,
\end{equation}
holds for all $t\in (\bar{t}, \bar{t}+8\tau_{2}-h)$ and for all $\eta\in W^{1, q}_{0}(B_{8\rho_{0}}(\bar{x})).$ Here $w_{h}$ is defined similarly to \eqref{eq1.10}. The existence of the solutions $v$ and $w$ follows from the general theory of monotone operators. Testing
\eqref{eq3.5} by $\eta=(v_{h})_{-}$ and $\eta=(v_{h}-N)_{+},$ integrating it over $(\bar{t}, t),~ t\in (\bar{t}, \bar{t}+8\tau_{1})$ and letting $h\rightarrow 0$, we obtain that $0\leqslant v \leqslant N \leqslant \lambda(r)M.$ Similarly we obtain that $0\leqslant w \leqslant N \leqslant \lambda(r)M.$

Set $\mathcal{D}(\rho):= \left\{(x, t): \mid x-\bar{x} \mid^{p}+(t-\bar{t}) \bigg(N\lambda(r)\dfrac{ \mid E \mid}{\rho^{n}}\bigg)^{p-2}\leqslant \rho^{p}\right\}.$

\begin{lemma}\label{lem3.1}
{\it Next inequalities hold
\begin{equation}\label{eq3.10}
v(x, t) \leqslant \gamma N \lambda(r) \frac{\mid E \mid}{\rho^{n}},\quad (x, t)\in Q_{1}\setminus \mathcal{D}(\rho),
\end{equation}
\begin{equation}\label{eq3.11}
w(x,t) \leqslant  \gamma N \lambda(r) \frac{\mid E \mid}{\rho_{0}^{n}},\quad  (x, t)\in Q_{2}\setminus \mathcal{D}(\rho_{0}).
\end{equation}
}
\end{lemma}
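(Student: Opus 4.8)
The plan is to prove the pointwise bound \eqref{eq3.10} for $v$; the bound \eqref{eq3.11} for $w$ follows by an identical argument after replacing $\rho$ by $\rho_{0}$, $\tau_{1}$ by $\tau_{2}$ and the domain $B_{8\rho}(\bar x)$ by $B_{8\rho_{0}}(\bar x)$, noting that in the case governing $w$ we have $\max\limits_{Q_{8\rho_{0},8\rho_{0}}(\bar x,\bar t)}a(x,t)\leqslant 4A\mu(8\rho_{0})(8\rho_{0})^{q-p}$, which is the same structural situation. So throughout I focus on $v$. The rough idea is a De Giorgi iteration carried out on a family of nested ``paraboloid'' regions shrinking onto the set $\mathcal D(\rho)$, controlling the $L^{\infty}$-norm of $v$ outside $\mathcal D(\rho)$ by the $L^{1}$-type quantity $N\lambda(r)|E|/\rho^{n}$, which is essentially the mass of the initial datum $N\lambda(r)\chi(E)$ divided by $\rho^{n}$.

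First I would set up the iteration. Let $K:=N\lambda(r)\dfrac{|E|}{\rho^{n}}$, which by \eqref{eq3.1} satisfies $K\geqslant\rho$, and let $\sigma:=\big(N\lambda(r)\dfrac{|E|}{\rho^{n}}\big)^{p-2}=K^{p-2}$ be the intrinsic time scaling used to define $\mathcal D(\rho)$ and $\tau_{1}=\rho^{p}K^{2-p}$. For $j=0,1,2,\dots$ put
\begin{equation*}
\rho_{j}:=\rho\Big(1+2^{-j}\Big),\qquad k_{j}:=2\gamma_{0}K\big(1-2^{-j}\big),
\end{equation*}
where $\gamma_{0}$ is a large constant to be fixed, and let $\mathcal D_{j}$ be the paraboloid region $\{|x-\bar x|^{p}+(t-\bar t)\sigma\leqslant \rho_{j}^{p}\}\cap Q_{1}$, so $\mathcal D_{0}$ is roughly $Q_{1}\setminus$(a fixed neighborhood) shrinking to $\mathcal D(\rho)=\mathcal D_{\infty}$. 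I would use the energy estimate Lemma \ref{lem2.2} (applied to the solution $v$ of \eqref{eq3.2}, which satisfies the same structure conditions \eqref{eq1.2}) with the level $k_{j}$, the sign $+$, and cut-off functions $\zeta_{j}$ adapted to the geometry of $\mathcal D_{j}\setminus\mathcal D_{j+1}$ with $|\nabla\zeta_{j}|\leqslant \gamma 2^{j}/\rho$ and $|\partial_{t}\zeta_{j}|\leqslant\gamma 2^{j}\sigma/\rho^{p}$. Crucially, the initial-datum term $\int (v-k_{j})_{+}^{2}\zeta_{j}^{q}(x,\bar t)\,dx$ vanishes for $j\geqslant 1$ because $v(x,\bar t)=N\lambda(r)\chi(E)\leqslant N\lambda(r)\leqslant K\cdot\rho^{n}/|E|$; here I have to be slightly careful — actually $v(\cdot,\bar t)$ equals $N\lambda(r)$ on $E$, which may exceed $K=N\lambda(r)|E|/\rho^{n}$ when $|E|<\rho^{n}$. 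This is the one genuinely delicate point and I address it next.

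The way around the initial-datum difficulty is the standard Maz'ya--DiBenedetto device: one does not try to bound $v$ by $K$ immediately at $t=\bar t$, but instead uses that the mass $\int v(x,\bar t)\,dx=N\lambda(r)|E|=K\rho^{n}$ is finite, runs an $L^{1}$--$L^{\infty}$ smoothing estimate (an energy/De Giorgi argument on the region near $t=\bar t$, or equivalently the known decay estimate for the Barenblatt-type comparison) to get $v(x,t)\leqslant \gamma K$ first on the slice a little after $\bar t$, and then propagates. Concretely, I would first prove an auxiliary bound of the form $\sup_{B_{8\rho}(\bar x)} v(\cdot,t)\leqslant \gamma K\big(\rho^{p}/(\sigma t)\big)^{n/p}+\,(\text{lower order})$ for $t\in(\bar t,\bar t+8\tau_{1}]$ by testing \eqref{eq3.5} with powers of $(v-k)_{+}$ and combining with Sobolev embedding and the $(\varPhi_{\mu})$/$(\varPhi_{\lambda})$ conditions from Remark \ref{rem1.1}, which control the $q$-part of $\varPhi$ by the $p$-part up to the factors $\mu(r),\lambda(r)$ — and here the hypothesis $\max_{Q_{8r,8r}}a\leqslant 4A\mu(8r)(8r)^{q-p}$ is exactly what makes $a|\nabla v|^{q}\lesssim |\nabla v|^{p}$ on the relevant range of $|\nabla v|$, keeping all constants structural. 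Once this smoothing bound is in hand, on the complement $Q_{1}\setminus\mathcal D(\rho)$ one has $\sigma t\gtrsim \rho^{p}$ on each paraboloid annulus, so the prefactor $(\rho^{p}/\sigma t)^{n/p}$ is $O(1)$, and the remaining De Giorgi iteration on the $k_{j},\rho_{j},\mathcal D_{j}$ with the recursive inequality
\begin{equation*}
y_{j+1}\leqslant \gamma\, b^{j}\, y_{j}^{1+\frac1n},\qquad y_{j}:=\frac{|\mathcal D_{j}\cap\{v>k_{j}\}|}{|\mathcal D_{j}|},
\end{equation*}
closes provided $y_{0}\leqslant\gamma^{-n}b^{-n^{2}}$; but $y_{0}\leqslant 1$ trivially once $\gamma_{0}$ is chosen large enough relative to the smoothing constant, since then $\{v>2\gamma_{0}K\}\cap\mathcal D_{0}=\varnothing$ already. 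Iterating $y_{j}\to 0$ gives $v\leqslant 2\gamma_{0}K$ a.e. on $\mathcal D(\rho)$'s complement inside $Q_{1}$, i.e. \eqref{eq3.10}.

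The main obstacle, as flagged, is the passage from the finite-mass initial datum to a pointwise bound by $K=N\lambda(r)|E|/\rho^{n}$ rather than by the raw height $N\lambda(r)$: this is precisely where the scaling $\tau_{1}=\rho^{p}K^{2-p}$ was chosen, and the $L^{1}$--$L^{\infty}$ smoothing must be done in the intrinsic geometry $|x-\bar x|^{p}+(t-\bar t)K^{p-2}\leqslant\rho^{p}$ with all constants depending only on the data — in particular one must verify that the $(p,q)$-growth does not spoil the smoothing, which is guaranteed by the $(\varPhi_{\lambda})$ condition on the range $r< v\leqslant K\lambda(r)$ together with $v\leqslant N\lambda(r)\leqslant M\lambda(r)$. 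A secondary technical point is the uniform handling of the time-derivative cut-off terms $\iint (v-k_{j})_{+}^{2}|\partial_{t}\zeta_{j}|\zeta_{j}^{q-1}$ in the energy inequality; with the chosen $|\partial_{t}\zeta_{j}|\leqslant\gamma 2^{j}\sigma/\rho^{p}$ and $k_{j}\sim\gamma_{0}K$ these contribute $\gamma 2^{j}\gamma_{0}^{2}K^{2}\sigma\rho^{-p}|\{v>k_{j}\}|=\gamma 2^{j}\gamma_{0}^{2}K^{p}\rho^{-p}|\{v>k_{j}\}|$, of the same order as the gradient term $\varPhi^{+}(K/\rho)|\{v>k_{j}\}|\sim K^{p}\rho^{-p}|\{\dots\}|$ (using $K\gtrsim\rho$ so that the $q$-part, if present, is dominated), so the iteration variable and the recursion are consistent — this is the routine bookkeeping I would not grind through here.
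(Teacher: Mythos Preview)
Your proof has a genuine gap at precisely the point you flag as delicate, and the paper avoids it by a much simpler geometric device that you are missing. Since $E\subset B_{r}(\bar x)$ with $r\leqslant\rho/2$, at time $\bar t$ the support of $v(\cdot,\bar t)$ lies entirely inside $\mathcal D(\rho)$. The paper therefore takes the cutoff $\zeta$ to \emph{vanish on the inner paraboloid} $\mathcal D_{j}$ and to equal $1$ on $Q_{1}\setminus\mathcal D_{j+1}$ (with $s_{j}=s(1+\sigma)-\sigma s\,2^{-j}$, so the $\mathcal D_{j}$ expand outward). With this choice the initial term $\int(v-k_{j})_{+}^{2}\zeta^{q}(x,\bar t)\,dx$ is identically zero, and no $L^{1}$--$L^{\infty}$ smoothing is needed at all. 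Your proposed smoothing estimate is asserted, not proved, and in the non-logarithmic double-phase setting it is itself a substantial result; leaving it as a black box is a real gap. (As a side remark, your iteration variable $y_{j}=|\mathcal D_{j}\cap\{v>k_{j}\}|/|\mathcal D_{j}|$ measures the level set \emph{inside} the paraboloid, yet your conclusion is about the complement; the geometry of your scheme needs to be straightened out.)

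The paper's closure after the De Giorgi step is also different from yours. Iterating gives $M_{\sigma}:=\sup_{Q_{1}\setminus\mathcal D_{\infty}}v\leqslant k$ where $k$ satisfies
\[
k^{2}=\gamma\sigma^{-\gamma}\Big(\tau_{1}^{-1}k^{2-p}+\rho^{-p}\Big)^{\frac{n+p}{p}}\iint_{Q_{1}\setminus\mathcal D_{0}}v^{p}\,dx\,dt .
\]
To bound the right-hand integral one tests \eqref{eq3.5} with $\eta=\min(v_{h},M_{0})$, $M_{0}:=\sup_{Q_{1}\setminus\mathcal D_{0}}v$, obtaining the global energy bound $\iint_{Q_{1}}\varPhi(x,t,|\nabla v_{M_{0}}|)\leqslant\gamma M_{0}N\lambda(r)|E|$; since $v=0$ on $\partial B_{8\rho}(\bar x)$, Poincar\'e gives $\iint_{Q_{1}\setminus\mathcal D_{0}}v^{p}=\iint_{Q_{1}\setminus\mathcal D_{0}}v_{M_{0}}^{p}\leqslant\gamma\rho^{p}\iint|\nabla v_{M_{0}}|^{p}$. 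Assuming $k\geqslant(\rho^{p}/\tau_{1})^{1/(p-2)}=N\lambda(r)|E|/\rho^{n}$ one obtains $M_{\sigma}^{2}\leqslant\gamma\sigma^{-\gamma}M_{0}\,N\lambda(r)|E|/\rho^{n}$, and Young's inequality plus the standard iteration in $\sigma$ absorbs $M_{0}$ into the left side, yielding \eqref{eq3.10}. This route uses only the energy inequality, the $(\varPhi_{\lambda})$ condition (to reduce $\varPhi$ to the pure $p$-growth since $v\leqslant M\lambda(r)$), and Poincar\'e on the zero lateral data --- no Barenblatt-type smoothing enters.
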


\begin{proof} For fixed $\sigma\in (0, 1),~ \rho \leqslant s \leqslant s(1+\sigma)\leqslant 2\rho,$ and $j=0, 1, 2, ...$ set $s_{j}:= s(1+\sigma)- \dfrac{\sigma s}{2^{j}},~ k_{j}:= k-2^{-j}k,$ $k>0,$ $\mathcal{D}_{j}:= \big\{(x, t):\mid x-\bar{x} \mid^{p}+(t-\bar{t})\bigg( N\lambda(r) \dfrac{ \mid E \mid}{\rho^{n}}\bigg)^{p-2}\leqslant s_{j}^{p}\big\},$ and let $M_{0}:= \sup\limits_{Q_{1}\setminus\mathcal{D}_{0}}v,~ M_{\sigma}:= \sup\limits_{Q_{1}\setminus\mathcal{D}_{\infty}}v,$ and consider the function $\zeta \in C^{\infty}(\mathbb{R}^{n+1}),~ 0\leqslant\zeta\leqslant 1,$ $\zeta = 0$ in $\mathcal{D}_{j}, \zeta=1$ in $Q_{1}\setminus\mathcal{D}_{j+1},~ \mid \nabla \zeta \mid\leqslant \dfrac{2^{j+1}}{\sigma s},$ $\mid \zeta_{t} \mid \leqslant 2^{p(j+1)}(\sigma s)^{-p}\bigg( N\lambda(r)\dfrac{ \mid E \mid}{\rho^{n}}\bigg)^{2-p}.$ Test \eqref{eq3.5} by $\eta=(v_{h}-k_{j})_{+} \xi^{q},$ integrating it over $(\bar{t}, t),~ t\in(\bar{t}, \bar{t}+8\tau_{1})$ and letting $h\rightarrow 0$, we arrive at
\begin{multline*}
\sup\limits_{\bar{t}<t<\bar{t}+8\tau_{1}}\int\limits_{B_{8\rho}(\bar{x})}(v-k_{j})^{2}_{+} \zeta^{q}dx +\iint\limits_{Q_{1}}
|\nabla (v-k_{j})_{+}|^{p}\zeta^{q} dxdt \leqslant\\
\leqslant \gamma\iint\limits_{Q_{1}\setminus \mathcal{D}_{j}} (v-k_{j})^{2}_{+}|\zeta_{t}|\zeta^{q-1}dxdt+
\gamma\iint\limits_{Q_{1}\setminus \mathcal{D}_{j}}\varPhi(x,t,(v-k_{j})_{+}|\nabla\zeta|)dxdt\leqslant \\
\leqslant \gamma \sigma^{-q}2^{\gamma j} \left(\tau_{1}^{-1}\iint\limits_{Q_{1}\setminus \mathcal{D}_{j}} (v-k_{j})^{2}_{+}dxdt+ \rho^{-p}\iint\limits_{Q_{1}\setminus \mathcal{D}_{j}} (v-k_{j})^{p}_{+}dxdt\right).
\end{multline*}
Above, we also used the following inequality, which is a consequence of our choices, condition ($ \varPhi_{\lambda}$), the fact that $v(x,t)\leqslant M \lambda(r)$ and $Q_{1}\subset Q_{\rho,\rho}(\bar{x},\bar{t})\subset Q_{2\rho, 2\rho}(x_{0},t_{0})$ :
$$
\varPhi\big(x,t,\frac{(v-k_{j})_{+}}{\rho}\big) \leqslant \bigg(\frac{v-k_{j}}{\rho}\bigg)^{p}_{+}\bigg(1+\max\limits_{Q_{\rho,\rho}(\bar{x},\bar{t})}a(x,t)\rho^{p-q}(M\lambda(r))^{q-p} \bigg)\leqslant
$$
$$
 \leqslant \bigg(\frac{v-k_{j}}{\rho}\bigg)^{p}_{+}\bigg(1+\max\limits_{Q_{2\rho,2\rho}(x_{0},t_{0})}a(x,t)\rho^{p-q}(M\lambda(r))^{q-p} \bigg)\leqslant \bigg(\frac{v-k_{j}}{\rho}\bigg)^{p}_{+}\times
 $$
 $$
\times\bigg(1+\gamma \mu(2\rho)(M\lambda(r))^{q-p} \bigg)\negthickspace \leqslant \negthickspace \bigg(\frac{v-k_{j}}{\rho}\bigg)^{p}_{+}\bigg(1+\gamma M^{q-p} \bigg) \negthickspace\leqslant \negthickspace \gamma \bigg(\frac{v-k_{j}}{\rho}\bigg)^{p}_{+},
$$
where $\quad(x,t)\in Q_{1}\setminus \mathcal{D}_{j}.$

Set $A_{j,k_{j}}:=\mathcal{D}_{j}\cap\{v\geqslant k_{j} \}$, then by the Sobolev embedding theorem from the previous  we obtain
\begin{multline*}
y_{j+1}=\negthickspace\negthickspace\iint\limits_{A_{j+1,k_{j+1}}} \negthickspace\negthickspace(v-k_{j})^{p}_{+}dxdt \leqslant \left(\,\,\,\iint\limits_{A_{j+1,k_{j+1}}} (v-k_{j})^{p\frac{n+2}{n}}_{+}\zeta^{q\frac{n+2}{n}}dxdt\right)^{\frac{n}{n+2}}\times\\
\times|A_{j,k_{j}}|^{\frac{2}{n+2}}\leqslant
\left(\sup\limits_{\bar{t}<t<\bar{t}+8\tau_{1}}\int\limits_{B_{8\rho}(\bar{x})}(v-k_{j})^{2}_{+}\zeta^{q}dx\right)^{\frac{p}{n+2}}\times
\qquad\qquad\qquad\\
\times\left(\iint\limits_{Q_{1}}|\nabla((v-k_{j})_{+}\zeta^{\frac{q}{p}})|^{p}\negthickspace\right)^{\frac{n}{n+2}}|A_{j,k_{j}}|^{\frac{2}{n+2}}
\leqslant\qquad\qquad\qquad\qquad
\\ \leqslant \gamma \sigma^{-\gamma}2^{j\gamma}\left(\frac{\tau_{1}^{-1}}{k^{p-2}}+\rho^{-p}\right)^{\frac{n+p}{n+2}} k^{-p(1-\frac{n}{n+2})} y_{j}^{1+\frac{p}{n+2}}~,j=0,1,2,...
\end{multline*}
Iterating the last inequality, we get that $\lim\limits_{j\rightarrow +\infty} y_{j} =0$, provided $k$ is chosen to satisfy
\begin{equation}\label{eq3.12}
k^{2} = \gamma \sigma^{-\gamma}\bigg(\frac{\tau_{1}^{-1}}{k^{p-2}} + \rho^{-p}\bigg)^{\frac{n+p}{p}}
\iint\limits_{Q_{1}\setminus \mathcal{D}_{0}}v^{p}dxdt.
\end{equation}
To estimate the integral on the right-hand side of \eqref{eq3.12}, we  test identity \eqref{eq3.5} by $\eta=\min(v_{h},M_{0})$. Integrating it over
$(\bar{t},t),t\in(\bar{t},\bar{t}+8\tau_{1})$ and letting $h\rightarrow 0$, for $v_{M_{0}}=\min(v,M_{0})$, we obtain
\begin{equation}\label{eq3.13}
\sup\limits_{\bar{t}<t<\bar{t}+8\tau_{1}}\int\limits_{B_{8\rho}(\bar{x})} \negthickspace v_{M_{0}}^{2} dx +\negthickspace\iint\limits_{Q_{1}}\negthickspace
\varPhi(x,t,|\nabla v_{M_{0}}|) dxdt \leqslant \gamma M_{0} N \lambda(r) |E|.
\end{equation}
Assumming that $k\geqslant (\rho^{p} /\tau_{1})^{\frac{1}{p-2}}= N\lambda(r)\dfrac{|E|}{\rho^{n}},$ from \eqref{eq3.12} and \eqref{eq3.13} by the Poincare inequality, using the fact that $v=v_{M_{0}}$ on $Q_{1}\setminus \mathcal{D}_{0}$, we obtain that
$$
M_{\sigma}^{2} \leqslant \gamma \sigma^{-\gamma}\rho^{-n-p}\iint\limits_{Q_{1}\setminus \mathcal{D}_{0}}v^{p}dxdt=
\gamma \sigma^{-\gamma}\rho^{-n-p}\iint\limits_{Q_{1}\setminus \mathcal{D}_{0}}v_{M_{0}}^{p}dxdt \leqslant
 $$
 $$
 \leqslant\gamma \sigma^{-\gamma}\rho^{-n}\iint\limits_{Q_{1}\setminus \mathcal{D}_{0}}|\nabla v_{M_{0}}|^{p}dxdt \leqslant
$$
\begin{equation}\label{eq3.14}
\leqslant \gamma\sigma^{-\gamma}\rho^{-n}\iint\limits_{Q_{1}}\varPhi(x,t,|\nabla v_{M_{0}}|) dxdt \leqslant \gamma\sigma^{-\gamma} M_{0} N \lambda(r)\frac{|E|}{\rho^{n}}.
\end{equation}
Using the Young inequality, we obtain for every $\varepsilon\in(0,1)$
\begin{equation*}
M_{\sigma} \leqslant \varepsilon M_{0} +\gamma \sigma^{-\gamma} \varepsilon^{-\gamma} N \lambda(r)\frac{|E|}{\rho^{n}},
\end{equation*}
from which, by iteration, the required inequality \eqref{eq3.10} follows.

The proof of \eqref{eq3.11} is completely similar, we also use the inequality, which is a consequence of our choices
\begin{multline*}
\varPhi\left(x,t,\frac{(w-k_{j})_{+}}{\rho_{0}}\right)\negthickspace \leqslant \negthickspace \bigg(\frac{w-k_{j}}{\rho_{0}}\bigg)^{p}_{+}\negthickspace\bigg(\negthickspace1+
\negthickspace\max\limits_{Q_{\rho_{0},\rho_{0}}(\bar{x},\bar{t})}\negthickspace a(x,t)\rho_{0}^{p-q}(M\lambda(r))^{q-p} \bigg)\\
\leqslant \bigg(\frac{w-k_{j}}{\rho_{0}}\bigg)^{p}_{+}\bigg(1+\max\limits_{Q_{8\rho_{0}, 8\rho_{0}}(\bar{x},\bar{t})}a(x,t)\rho_{0}^{p-q}(M\lambda(r))^{q-p} \bigg)\leqslant \\
\leqslant\bigg(\frac{w-k_{j}}{\rho_{0}}\bigg)^{p}_{+}\bigg(1+\gamma \mu(8\rho_{0})(M\lambda(r))^{q-p} \bigg)\leqslant\\
\leqslant \bigg(\frac{w-k_{j}}{\rho_{0}}\bigg)^{p}_{+}\bigg(1+\gamma M^{q-p} \bigg) \leqslant \gamma \bigg(\frac{w-k_{j}}{\rho_{0}}\bigg)^{p}_{+}, \quad(x,t)\in Q_{2}\setminus \mathcal{D}_{j}.
\end{multline*}
This completes the proof of the lemma.
\end{proof}

\begin{lemma}\label{lem3.2}
{\it  There exist numbers $\varepsilon_{1}, \alpha_{1}, \delta_{1}\in(0, 1)$ depending only on the data  such that
\begin{equation}\label{eq3.15}
\left| \left\{B_{4\rho}(\bar{x}): v (\cdot, t_{1})\leqslant \varepsilon_{1}N\lambda(r)\frac{\mid E \mid}{\rho^{n}} \right\} \right| \leqslant(1-\alpha_{1})\mid B_{4\rho}(\bar{x}) \mid
\end{equation}
for some time level $t_{1}\in (\bar{t}+\delta_{1}\tau_{1},~ \bar{t}+\tau_{1}),$
\begin{equation}\label{eq3.16}
\left| \left\{B_{4\rho_{0}}(\bar{x}): w (\cdot, t_{2})\leqslant \varepsilon_{1}N\lambda(r)\frac{\mid E \mid}{\rho_{0}^{n}} \right\} \right| \leqslant(1-\alpha_{1})\mid B_{4\rho_{0}}(\bar{x}) \mid
\end{equation}
for some time level $t_{2}\in (\bar{t}+\delta_{1}\tau_{2},~ \bar{t}+\tau_{2}),$
}
\end{lemma}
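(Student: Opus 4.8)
The plan is to combine a lower bound for the mass of $v(\cdot,t)$ over $B_{4\rho}(\bar x)$ at one carefully chosen time level with the pointwise upper bound of Lemma~\ref{lem3.1} — localized so as to hold on all of $B_{4\rho}(\bar x)$ at that level — and then split the mass integral. Write $\mathcal{N}:=N\lambda(r)\dfrac{|E|}{\rho^{n}}$ (so $\mathcal{N}\geqslant\rho$ by \eqref{eq3.1}, $\tau_{1}=\rho^{p}\mathcal{N}^{2-p}$, and $\mathcal{N}\leqslant\gamma M\lambda(r)$ since $|E|\leqslant\gamma\rho^{n}$) and $\varkappa:=n(p-2)+p>0$. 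By the maximum principle $0\leqslant v\leqslant N\lambda(r)$; the datum $v(\cdot,\bar t)=N\lambda(r)\chi(E)$ is supported in $B_{r}(\bar x)\subset B_{\rho/2}(\bar x)$, and $\int_{B_{8\rho}(\bar x)}v(x,\bar t)\,dx=N\lambda(r)|E|$. The proof of Lemma~\ref{lem3.1} also yields $\iint_{Q_{1}}\varPhi(x,t,|\nabla v_{M_{0}}|)\,dxdt\leqslant\gamma M_{0}N\lambda(r)|E|$ with $M_{0}=\sup_{Q_{1}\setminus\mathcal{D}(\rho)}v\leqslant\gamma\mathcal{N}$, so on the annulus $B_{4\rho}(\bar x)\setminus B_{2\rho}(\bar x)$ (which lies in $Q_{1}\setminus\mathcal{D}(\rho)$ and on which $v(\cdot,\bar t)\equiv0$) one gets $\iint\bigl(|\nabla v|^{p}+a|\nabla v|^{q}\bigr)\leqslant\gamma\mathcal{N}^{2}\rho^{n}$ over that annulus crossed with $(\bar t,\bar t+8\tau_{1})$.

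\emph{Step 1 (mass retention on a short interval).} Test \eqref{eq3.5} with $\eta=\zeta^{q}$, $\zeta=\zeta(x)$, $\zeta\equiv1$ on $B_{2\rho}(\bar x)$, $\operatorname{supp}\zeta\subset B_{4\rho}(\bar x)$, $|\nabla\zeta|\leqslant\gamma/\rho$. Then $\frac{d}{dt}\int v\zeta^{q}\,dx=-\int\mathbb{A}(x,t,\nabla v)\cdot\nabla(\zeta^{q})\,dx$ is supported in the annulus above; bounding $|\mathbb{A}|$ by \eqref{eq1.2}, using Hölder's inequality in space and time together with the energy bound just stated, the inequality $a\leqslant A\mu(8\rho)(8\rho)^{q-p}$ (from \eqref{eq1.3} and $a(x_{0},t_{0})=0$), and the identities $\lambda(r)^{q-p}=\mu(r)^{-1}$, $\mu(8\rho)\leqslant\mu(r)$, all powers of $\rho$ and $\mathcal{N}$ cancel and one obtains, for $t\in[\bar t,\bar t+\delta\tau_{1}]$ and any $\delta\in(0,1)$,
\begin{equation*}
\int_{B_{4\rho}(\bar x)}v(x,t)\,dx\;\geqslant\;\bigl(1-\gamma\,(\delta^{1/p}+\delta^{1/q})\bigr)\,N\lambda(r)|E|,
\end{equation*}
with $\gamma$ depending only on the data. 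Hence, fixing $\delta\in(0,1)$ small enough (data only), $\int_{B_{4\rho}(\bar x)}v(x,t)\,dx\geqslant\tfrac34 N\lambda(r)|E|$ for all $t\in[\bar t,\bar t+\delta\tau_{1}]$.

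\emph{Step 2 (pointwise bound at a short time level) and conclusion.} Fix an integer $K\geqslant1$, depending only on the data, with $K^{-\varkappa}\leqslant\delta$. Running the De Giorgi iteration from the proof of Lemma~\ref{lem3.1} with $\rho$ replaced by $\rho/K$ — admissible, since $N\lambda(r)\frac{|E|}{(\rho/K)^{n}}=K^{n}\mathcal{N}\geqslant\rho/K$ and, $K$ being a data constant, the inequality $\varPhi(x,t,(v-k_{j})_{+}/\rho)\leqslant\gamma((v-k_{j})_{+}/\rho)^{p}$ used there survives (with a data constant) — gives
\begin{equation*}
v(x,t)\leqslant\gamma K^{n}\mathcal{N}\ \ \text{on }Q_{1}\setminus\mathcal{D}_{K},\qquad\mathcal{D}_{K}:=\bigl\{\,|x-\bar x|^{p}+(t-\bar t)\,(K^{n}\mathcal{N})^{p-2}\leqslant(\rho/K)^{p}\,\bigr\}.
\end{equation*}
At $t_{1}:=\bar t+K^{-\varkappa}\tau_{1}$ the slice $\mathcal{D}_{K}\cap\{t=t_{1}\}$ collapses to $\{\bar x\}$, whence $v(x,t_{1})\leqslant\gamma K^{n}\mathcal{N}$ for a.e.\ $x\in B_{4\rho}(\bar x)$; moreover $t_{1}\in(\bar t+\delta_{1}\tau_{1},\bar t+\tau_{1})$ with $\delta_{1}:=\tfrac12 K^{-\varkappa}$, and $t_{1}\leqslant\bar t+\delta\tau_{1}$, so Step~1 applies at $t_{1}$. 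Splitting $\int_{B_{4\rho}(\bar x)}v(x,t_{1})\,dx$ over $\{v(\cdot,t_{1})\leqslant\varepsilon_{1}\mathcal{N}\}$ and its complement and using Steps~1--2,
\begin{equation*}
\tfrac34\,N\lambda(r)|E|\;\leqslant\;\varepsilon_{1}\mathcal{N}\,|B_{4\rho}(\bar x)|+\gamma K^{n}\mathcal{N}\,\bigl|\{B_{4\rho}(\bar x):v(\cdot,t_{1})>\varepsilon_{1}\mathcal{N}\}\bigr|.
\end{equation*}
Since $\mathcal{N}\,|B_{4\rho}(\bar x)|=\gamma_{0}N\lambda(r)|E|$ with $\gamma_{0}=\gamma_{0}(n)$, choosing $\varepsilon_{1}$ with $\gamma_{0}\varepsilon_{1}\leqslant\tfrac14$ yields $|\{B_{4\rho}(\bar x):v(\cdot,t_{1})>\varepsilon_{1}\mathcal{N}\}|\geqslant\alpha_{1}|B_{4\rho}(\bar x)|$ with $\alpha_{1}=\alpha_{1}(\text{data})$, which is \eqref{eq3.15} (note $\varepsilon_{1}\mathcal{N}=\varepsilon_{1}N\lambda(r)\frac{|E|}{\rho^{n}}$). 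Inequality \eqref{eq3.16} is proved identically with $w,\rho_{0},\tau_{2}$ in place of $v,\rho,\tau_{1}$, the bound $\max_{Q_{8\rho_{0},8\rho_{0}}(\bar x,\bar t)}a\leqslant4A\mu(8\rho_{0})(8\rho_{0})^{q-p}$ taking over the role played for $v$ by $\max_{Q_{8r,8r}(\bar x,\bar t)}a\leqslant4A\mu(8r)(8r)^{q-p}$.

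The crux is Step~1: the naive bound on the outgoing flux through $\partial B_{4\rho}(\bar x)$ is merely of the order of the total mass $N\lambda(r)|E|$, so mass conservation inside $B_{4\rho}(\bar x)$ is not available directly. The small factor $\delta^{1/p}+\delta^{1/q}$ is produced by confining oneself to the short interval $(\bar t,\bar t+\delta\tau_{1})$ and by Hölder's inequality in the time variable, and the fact that the resulting constant depends on the data alone — not on $|E|/\rho^{n}$ or on $N\lambda(r)$ — hinges exactly on the definition of $\tau_{1}$ as the intrinsic time $\rho^{p}\mathcal{N}^{2-p}$ and on \eqref{eq3.1}. A secondary technical point is that at such short times $\mathcal{D}(\rho)$ covers almost all of $B_{\rho}(\bar x)$, so Lemma~\ref{lem3.1} by itself does not bound $v$ there; this is precisely why the rescaled iteration of Step~2 is needed.
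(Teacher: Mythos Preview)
Your Step~1 (mass retention) is correct and the final splitting argument in Step~3 is valid \emph{if} Step~2 holds. The gap is in Step~2.

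When you ``run the De Giorgi iteration from the proof of Lemma~\ref{lem3.1} with $\rho$ replaced by $\rho/K$'', the energy inequality is obtained by testing \eqref{eq3.5} with $(v_{h}-k_{j})_{+}\zeta^{q}$ and integrating from $\bar t$; the boundary term at $t=\bar t$ vanishes in the paper only because $\zeta(\cdot,\bar t)=0$ on $\mathcal{D}_{j}\cap\{t=\bar t\}=B_{s_{j}}(\bar x)\supset B_{r}(\bar x)\supset E$. After your rescaling the spatial section of $\mathcal{D}_{K}$ at $t=\bar t$ is $B_{\rho/K}(\bar x)$, while $E\subset B_{r}(\bar x)$ with $r$ allowed to be as large as $\rho/2$; for any data constant $K>2$ this no longer contains $E$, so the boundary term survives and contributes $(N\lambda(r)-k_{j})_{+}^{2}|E|\sim (N\lambda(r))^{2}|E|$. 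This quantity equals $N\lambda(r)\cdot\mathcal{N}\rho^{n}$ and is \emph{not} controlled by $(K^{n}\mathcal{N})^{2}\rho^{n}$ unless $|E|\gtrsim(\rho/K^{2})^{n}$, which is not a data hypothesis. The iteration therefore does not close, and the asserted bound $v(\cdot,t_{1})\leqslant\gamma K^{n}\mathcal{N}$ on all of $B_{4\rho}(\bar x)$ is unproved. Interpreting Step~2 instead as ``apply Lemma~\ref{lem3.1} to the auxiliary problem with $\rho$ replaced by $\rho/K$'' does not help either: that yields a bound on a \emph{different} solution $\tilde v$, and comparison gives $v\geqslant\tilde v$, the wrong direction.

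The paper sidesteps this difficulty altogether: it tests \eqref{eq3.5} with $\eta=v_{h}-N\lambda(r)\zeta_{1}^{q}$ over the \emph{full} interval $(\bar t,\bar t+\tau_{1})$. The resulting identity has on its left the key term $\tfrac12(N\lambda(r))^{2}|E|$, and on its right $N\lambda(r)\int v(\cdot,\bar t+\tau_{1})\zeta_{1}^{q}$. At the end time $\bar t+\tau_{1}$ the paraboloid $\mathcal{D}(\rho)$ has collapsed to a point, so Lemma~\ref{lem3.1} gives $v(\cdot,\bar t+\tau_{1})\leqslant\gamma\mathcal{N}$ \emph{everywhere} on $B_{4\rho}(\bar x)$, which is exactly what your Step~2 was trying to manufacture at a short time. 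The remaining flux term $I_{2}$ is handled by a second cutoff and Young's inequality, and a pigeonhole in time produces the level $t_{1}$. Thus the paper replaces your ``short--time mass $+$ rescaled sup bound'' scheme by ``end--time sup bound $+$ clever test function'', avoiding the incompatibility between the support of the initial data and the rescaled paraboloid.
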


\begin{proof} Let $\zeta_{1}(x)\in C^{\infty}_{0}(B_{3\rho}(\bar{x}))$, $0\leqslant\zeta_{1}(x)\leqslant 1$, $\zeta_{1}(x)=1$ in $B_{2\rho}(\bar{x})$, $\mid \nabla\zeta_{1}(x)\mid\leqslant\dfrac{1}{\rho}.$ Testing \eqref{eq3.5} by $\eta=v_{h}-N\lambda(r)\zeta^{q}_{1}(x),$ integrating it over $(\bar{t}, \bar{t}+\tau_{1})$ and letting $h\rightarrow 0$, we obtain
$$
\frac{N^{2}}{2}[\lambda(r)]^{2}\mid E\mid + \frac{1}{2}\int\limits_{B_{8\rho}(\bar{x})} v^{2}(x, \bar{t}+\tau_{1})dx+$$
$$
+\gamma^{-1}\int\limits^{\bar{t}+\tau_{1}}\limits_{\bar{t}}\int\limits_{B_{8\rho}(\bar{x})} \varPhi(x, t, \mid \nabla v\mid) dxdt\leqslant
 N\lambda(r) \int\limits_{B_{8\rho}(\bar{x})} v(x, \bar{t}+\tau_{1})\zeta^{q}_{1}(x)dx+
$$
\begin{equation}\label{eq3.17}
+\gamma N \frac{\lambda(r)}{\rho} \int\limits_{\bar{t}}\limits^{\bar{t}+\tau_{1}}\int\limits_{B_{3\rho}(\bar{x})\setminus B_{2\rho}(\bar{x})} \varphi(x, t, \mid \nabla v \mid)\zeta^{q-1}_{1}(x)dxdt=I_{1}+I_{2}.
\end{equation}

Let us estimate the terms on the right-hand side of \eqref{eq3.17}. By Lemma \ref{lem3.1} we obtain
$$
I_{1}\leqslant \varepsilon_{1} N^{2}[\lambda(r)]^{2} \mid E\mid+\qquad\qquad\qquad\qquad\qquad\qquad\qquad\qquad\qquad\qquad
$$
\begin{equation}\label{eq3.18}
+\gamma N^{2}[\lambda(r)]^{2}\frac{\mid E \mid}{\rho^{n}} \left|\left\{ B_{4\rho}(\bar{x}):v(\cdot, \bar{t}+\tau_{1}) \geqslant \varepsilon_{1}N\lambda(r)\frac{\mid E \mid}{\rho^{n}}\right\} \right|.
\end{equation}

Let $\zeta_{2}(x)\in C^{\infty}(\mathbb{R}^{n})$, $0\leqslant\zeta_{2}(x)\leqslant 1$, $\zeta_{2}(x)=1$ in $B_{3\rho}(\bar{x})\setminus B_{2\rho}(\bar{x}),$ $\zeta_{2}(x)=0$ for $x\in B_{\frac{3}{2}\rho}(\bar{x})$ and for $x\in \mathbb{R}^{n}\setminus B_{4\rho}(\bar{x}),$  $\mid \nabla \zeta_{2}(x)\mid~ \leqslant \gamma\rho^{-1}.$ Using the Young inequality with $\varepsilon=\varepsilon_{0}N\lambda(r)\dfrac{\mid E\mid}{\rho^{n+1}},$ where $\varepsilon_{0}\in (0, 1)$ to be determined later, we obtain
\begin{multline}\label{eq3.19}
I_{2}\leqslant\gamma N\varepsilon^{-1}\frac{\lambda(r)}{\rho} \int\limits_{\bar{t}}\limits^{\bar{t}+\tau_{1}} \int\limits_{B_{4\rho}(\bar{x})\setminus B_{\frac{3}{2}}(\bar{x})} \varPhi(x, t, \mid \nabla v \mid)\mid \nabla v \mid \zeta_{2}^{q}(x)dxdt+\\
+\gamma N\frac{\lambda(r)}{\rho}\int\limits_{\bar{t}}\limits^{\bar{t}+\tau_{1}}\int\limits_{B_{4\rho}(\bar{x})} \varphi(x, t, \varepsilon)dxdt=I_{3}+I_{4}.
\end{multline}
By condition ($\varPhi_{\lambda}$) we have
\begin{equation*}
\max\limits_{Q_{2\rho, 2\rho}(x_{0},t_{0})} \varphi\left(x,t, N\lambda(r)\frac{ \mid E \mid}{\rho^{n+1}}\right)\leqslant \gamma \rho^{1-p}\bigg(N\lambda(r)\frac{ \mid E \mid}{\rho^{n}}\bigg)^{p-1},
\end{equation*}
so
\begin{multline}\label{eq3.20}
I_{4}\leqslant \gamma N\frac{\lambda(r)}{\rho} \varepsilon_{0}^{p-1}\int\limits_{\bar{t}}\limits^{\bar{t}+\tau_{1}}\int\limits_{B_{4\rho}(\bar{x})} \varphi\left(x, t,N\lambda(r) \frac{\mid E \mid}{\rho^{n+1}}\right)dx\,dt\leqslant\\
\leqslant\gamma N\frac{\lambda(r)}{\rho}\varepsilon_{0}^{p-1}\max\limits_{Q_{2\rho, 2\rho}(x_{0},t_{0})} \varphi\left(x,t, N\lambda(r)\frac{ \mid E \mid}{\rho^{n+1}}\right)\mid B_{\rho}(\bar{x})\mid \tau_{1} \leqslant\\
\leqslant \gamma \varepsilon_{0}^{p-1} N^{2}[\lambda(r)]^{2} \mid E \mid.
\end{multline}

To estimate $I_{3}$ we test \eqref{eq3.5} by $\eta=v_{h}\zeta^{q}_{2}(x),$ integrating it over $(\bar{t}, \bar{t}+\tau_{1})$ and letting $h\rightarrow 0$, we arrive at
$$I_{3}\leqslant \gamma \varepsilon^{-1}N\frac{\lambda(r)}{\rho} \int\limits_{\bar{t}}\limits^{\bar{t}+\tau_{1}}\int\limits_{B_{4\rho}(\bar{x})\setminus B_{\frac{3}{2}\rho}(\bar{x})} \varPhi(x, t, \frac{v}{\rho}) dxdt.$$

From this, by condition $(g_{\lambda})$ and Lemma \ref{lem3.1} we obtain
$$
I_{3}\negthickspace\leqslant\negthickspace\gamma\varepsilon^{-1}N\frac{\lambda(r)}{\rho} \negthickspace\negthickspace \int\limits^{\bar{t}+\tau_{1}}\limits_{\bar{t}} \negthickspace \negthickspace\int\limits_{B_{4\rho}(\bar{x})\setminus B_{\frac{3}{2}\rho}(\bar{x})}\negthickspace\negthickspace \negthickspace \negthickspace\varPhi^{+}_{Q_{2\rho, 2\rho}(x_{0},t_{0})}\bigg( \frac{v}{\rho}\bigg)dxdt
\negthickspace\leqslant\negthickspace \gamma \frac{\varepsilon_{1}}{\varepsilon_{0}}  N^{2}[\lambda(r)]^{2}  \mid E\mid\negthickspace+
$$
\begin{equation}\label{eq3.21}
 + \gamma  \frac{N^{2}[\lambda(r)]^{2} |E|}{\varepsilon_{0}\tau_{1}\rho^{n}} \negthickspace \int\limits^{\bar{t}+\tau_{1}}\limits_{\bar{t}} \left| \left\{ B_{4\rho}(\bar{x})\setminus  B_{\frac{3}{2}}(\bar{x})\negthickspace: v(\cdot, t)\negthickspace \geqslant\negthickspace \varepsilon_{1} N\lambda(r) \frac{ E}{\rho^{n}} \right\}\right| dt.
\end{equation}

Collecting estimates \eqref{eq3.17}-\eqref{eq3.21}, we arrive at
\begin{multline*}
\frac{1}{2} N^{2}[\lambda(r)]^{2} \mid E \mid \leqslant \gamma \left(\varepsilon_{1}+\varepsilon_{0}^{p-1}+\frac{\varepsilon_{1}}{\varepsilon_{0}}\right) N^{2}[\lambda(r)]^{2}\mid E\mid +\\
+\gamma  N^{2}[\lambda(r)]^{2} \frac{\mid E\mid}{\rho^{n}} \left|\left\{B_{4\rho}(\bar{x}):v(\cdot, \bar{t}+\tau_{1})\geqslant\varepsilon_{1}N\lambda(r)\frac{\mid E\mid}{\rho^{n}}\right\}\right|+\\
\gamma N^{2}[\lambda(r)]^{2} \frac{\mid E\mid}{\varepsilon_{0}\tau_{1}\rho^{n}} \int\limits_{\bar{t}}\limits^{\bar{t}+\tau_{1}} \left|\left\{ B_{4\rho}(\bar{x}):v(\cdot, t)\geqslant\varepsilon_{1} N \frac{\mid E\mid}{\rho^{n}} \right\}\right| dt
\end{multline*}

Choose $\varepsilon_{0}$ such that $\gamma\varepsilon_{0}^{p-1}=\dfrac{1}{8},$ and $\varepsilon_{1}$ such that $\gamma \varepsilon_{1}(1+\dfrac{1}{\varepsilon_{0}})=\dfrac{1}{8},$ from the previous  we obtain
$$\gamma^{-1}\rho^{n}\leqslant \left|\left\{B_{4\rho}(\bar{x}):v(\cdot, \bar{t}+\tau_{1})\geqslant\varepsilon_{1}N\lambda(r)\frac{\mid E\mid}{\rho^{n}}\right\}\right|+$$
$$+\frac{1}{\tau_{1}}\int\limits^{\bar{t}+\tau_{1}}\limits_{\bar{t}}\left|\left\{B_{4\rho}(\bar{x}):v(\cdot, t)\geqslant\varepsilon_{1}N\lambda(r)\frac{\mid E\mid}{\rho^{n}}\right\}\right| dt.$$
From this, we conclude that at least one of the following two inequalities holds
$$\left|\left\{B_{4\rho}(\bar{x}):v(\cdot, \bar{t}+\tau_{1})\geqslant\varepsilon_{1}N\lambda(r)\frac{\mid E\mid}{\rho^{n}}\right\}\right|\geqslant\frac{1}{2\gamma}\mid B_{4\rho}(\bar{x})\mid,$$
$$\int\limits_{\bar{t}}\limits^{\bar{t}+\tau_{1}}\left|\left\{B_{4\rho}(\bar{x}):v(\cdot, t)\geqslant\varepsilon_{1}N\lambda(r)\frac{\mid E\mid}{\rho^{n}}\right\}\right|\geqslant\frac{1}{2\gamma} \tau_{1}\mid B_{4\rho}(\bar{x})\mid.$$
From the second one it follows that there exists $t_{1}\in (\bar{t}+\frac{1}{4\gamma}\tau_{1}, \bar{t}+\tau_{1})$ such that
$$\big|\big\{B_{4\rho}(\bar{x}):v(\cdot, t_{1})\geqslant\varepsilon_{1}N\lambda(r)\frac{\mid E\mid}{\rho^{n}}\big\}\big|\geqslant\frac{1}{4\gamma-1}\mid B_{4\rho}(\bar{x})\mid,$$
indeed, if not, then
$$(1-\frac{1}{2\gamma})\tau_{1}\mid B_{4\rho}(\bar{x})\mid<\int\limits^{\bar{t}+\tau_{1}}\limits_{\bar{t}+\frac{1}{4\gamma}\tau_{1}}
\left|\left\{B_{4\rho}(\bar{x}):v(\cdot,t)\leqslant\varepsilon_{1}N\lambda(r)\frac{\mid E\mid}{\rho^{n}}\right\}\right| dt \leqslant$$
$$\leqslant \int\limits^{\bar{t}+\tau_{1}}\limits_{\bar{t}}\left|\left\{B_{4\rho}(\bar{x}):v(\cdot,t)\leqslant\varepsilon_{1}N\lambda(r)\frac{\mid E\mid}{\rho^{n}}\right\}\right|dt\leqslant(1-\frac{1}{2\gamma})\tau_{1}\mid B_{4\rho}(\bar{x})\mid,$$
reaching a contradiction. This proves inequality \eqref{eq3.15}.

The proof of \eqref{eq3.16} is completely similar, we also use the inequality,
which is a consequence of our choices
\begin{equation*}
 \varPhi^{+}_{Q_{8\rho_{0}, 8\rho_{0}}(\bar{x},\bar{t})}\bigg( N\lambda(r)\frac{ \mid E \mid}{\rho_{0}^{n+1}}\bigg)\leqslant \frac{\gamma}{ \rho_{0}^{p}}\bigg(N\lambda(r)\frac{ \mid E \mid}{\rho_{0}^{n}}\bigg)^{p},
\end{equation*}
this completes the proof of the lemma.
\end{proof}

\section{Expansion of positivity}\label{Sect4}
The following theorem will be used in the sequel which is an expansion of positivity result. In the case of the p-Laplace evolution equation this result was proved by DiBenedetto, Gianazza and Vespri \cite{DiBGiVes3}, in the logarithmic case  this theorem was proved in \cite{BurchSkrPotAn}.

\begin{theorem}\label{th4.1}
 Let \,\,$u$\,\, be a \,\,non-negative \,\,bounded\,\, weak\,\, solution\,\, to \\ Eq. \eqref{eq1.1} and let conditions \eqref{eq1.2}-- \eqref{eq1.4} be fulfilled. Fix point $(x_{0},t_{0})\in \Omega_{T}$ such that $a(x_{0},t_{0})=0$, and let  for some $\rho>0$, for some $0<N\leqslant M$ and some $\delta\in (0, 1),$
$$ Q_{\rho,\theta}(y,s)\subset Q_{ 2\rho,2\rho}(x_{0},t_{0})\subset Q_{ 8\rho, 8\rho}(x_{0},t_{0}) \subset \Omega_{T},\quad \theta=\delta \rho^{p}  \big(N\lambda(\rho)\big)^{2-p}.$$
Assume also that
\begin{equation}\label{eq4.1}
\mid\{B_{\rho}(y):u(\cdot,s)\leqslant N \lambda(\rho) \}\mid\leqslant(1-\alpha)\mid B_{\rho}(y)\mid,
\end{equation}
for some $\alpha \in (0,1)$.
Then there exist $\sigma_{0}\in(0, 1)$ and $1 <\bar{C}_{1}<\bar{C}_{2}$ depending only upon the data and $\alpha, \delta$ such that either

\begin{equation}\label{eq4.2}
\sigma_{0}N \lambda(\rho) \leqslant \rho,
\end{equation}
or
\begin{equation}\label{eq4.3}
u(x, t)\geqslant\sigma_{0}N \lambda(\rho) ,\quad \text{for all}\quad (x,t)\in B_{2\rho}(y)\times (s+ \bar{C}_{1}\theta, s+ \bar{C}_{2}\theta).
\end{equation}

\end{theorem}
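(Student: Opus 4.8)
The plan is to off-load the ``expansion of the ball'' onto the auxiliary solutions of Section~\ref{Sect3} -- where, by Lemmas~\ref{lem3.1}--\ref{lem3.2}, it costs only the harmless factor $|E|/\rho^{n}$ -- and to carry out on $u$ itself merely the \emph{contraction} $B_{4\rho}\to B_{2\rho}$, which loses a fixed constant but, crucially, no power of $\mu$. This is the Maz'ya--Landis workaround mentioned in the introduction, and it is the reason the normalization appearing in \eqref{eq4.3} is $\lambda(\rho)=[\mu(\rho)]^{-\frac{1}{q-p}}$ rather than $\lambda_{1}(\rho)$.

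First I would dispose of the trivial alternative: if $\sigma_{0}N\lambda(\rho)\leqslant\rho$ (with $\sigma_{0}$ fixed below) we are in case \eqref{eq4.2}. So assume $\sigma_{0}N\lambda(\rho)>\rho$ and put $E:=\{x\in B_{\rho}(y):u(x,s)\geqslant N\lambda(\rho)\}$; then \eqref{eq4.1} gives $|E|\geqslant\alpha\,|B_{\rho}(y)|$, and for $\sigma_{0}$ small in terms of $\alpha$ the failure of \eqref{eq4.2} together with this lower bound yields the normalization condition \eqref{eq3.1} needed in Section~\ref{Sect3}. I then split, as there, according to the size of $\max_{Q_{8\rho,8\rho}(y,s)}a$ relative to $4A\mu(8\rho)(8\rho)^{q-p}$: in the first case I use the auxiliary solution $v$ of \eqref{eq3.2}--\eqref{eq3.4}, in the second the auxiliary solution $w$ of \eqref{eq3.6}--\eqref{eq3.8} built on the intrinsic sub-scale $\rho_{0}$. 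In either case the initial datum is dominated by $u(\cdot,s)$ and the solution vanishes on the lateral boundary, so the comparison principle (available through the monotonicity \eqref{eq1.14}, resp.\ through \eqref{eq1.2}) gives $u\geqslant v$ (resp.\ $u\geqslant w$) on the common cylinder; it therefore suffices to bound the auxiliary solution from below on $B_{2\rho}(y)\times(s+\bar{C}_{1}\theta,s+\bar{C}_{2}\theta)$. I describe the first case; the second is identical with $\rho_{0}$ in place of $\rho$, using the maximality of $\rho_{0}$ and \eqref{eq1.3} to verify that the final level is still $\geqslant\sigma_{0}N\lambda(\rho)$.

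By Lemma~\ref{lem3.2} there is a time level $t_{1}\in(s+\delta_{1}\tau_{1},s+\tau_{1})$ such that
$$\Big|\Big\{B_{4\rho}(y):v(\cdot,t_{1})\leqslant\varepsilon_{1}N\lambda(\rho)\tfrac{|E|}{\rho^{n}}\Big\}\Big|\leqslant(1-\alpha_{1})\,|B_{4\rho}(y)|,$$
and since $|E|\geqslant\alpha|B_{\rho}(y)|$ the level here is $N_{1}:=\varepsilon_{1}N\lambda(\rho)|E|/\rho^{n}\geqslant c\,\alpha\,\varepsilon_{1}\,N\lambda(\rho)$; as $u\geqslant v$ the same measure--density estimate holds for $u$ at level $N_{1}$ and time $t_{1}$. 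From here I run a standard expansion-of-positivity argument with the \emph{fixed} fraction $\alpha_{1}$: Lemma~\ref{lem2.3} keeps the bad set of $u$ of measure at most $(1-\tfrac12\alpha_{1}^{2})|B_{4\rho}(y)|$ on a time interval of length comparable to $\theta$; then a De Giorgi iteration built on the energy estimate Lemma~\ref{lem2.2} and the De Giorgi--Poincar\'e Lemma~\ref{lem2.1} shrinks the measure of the bad set -- at a level lowered only by a fixed factor $\sigma_{2}=\sigma_{2}(\alpha_{1},\text{data})$ -- below the threshold $\nu|Q|$ required by the second part of Lemma~\ref{lem2.4}. The point that makes that part of Lemma~\ref{lem2.4} applicable with a \emph{data-dependent} constant is that $a(x_{0},t_{0})=0$: in the case at hand $\max a\leqslant\gamma\,\mu(\rho)\rho^{q-p}$ on the cylinders in play, whence $(N_{1}/\rho)^{q-p}\max_{Q}a\leqslant\gamma(N\lambda(\rho))^{q-p}\mu(\rho)=\gamma N^{q-p}\leqslant\gamma M^{q-p}$, so no power of $\mu$ enters. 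Lemma~\ref{lem2.4} then gives $u\geqslant\xi_{0}\sigma_{2}N_{1}$ pointwise on $B_{2\rho}(y)$ over a suitable time interval; choosing $\sigma_{0}\sim c\,\alpha\,\varepsilon_{1}\,\sigma_{2}\,\xi_{0}$ and tracking the intrinsic waiting times $\tau_{1}$, $\bar{\theta}$ and the De Giorgi cylinder -- all comparable to $\theta$ -- yields \eqref{eq4.3}, with Lemma~\ref{lem3.1} used to guarantee that the exceptional set $\mathcal{D}(\rho)$ does not interfere with the conclusion on all of $B_{2\rho}(y)$.

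The main obstacle -- and the reason the auxiliary solutions are introduced -- is precisely this passage from the non-degenerate but not small fraction $\alpha_{1}$ to a genuine pointwise lower bound on a ball as large as $B_{2\rho}(y)$, carried out so that the lost constant is independent of $\mu(\rho)$: the naive route, which must simultaneously enlarge the ball and drive the measure to the $\mu$-dependent threshold of the \emph{first} part of Lemma~\ref{lem2.4}, produces a loss like $\exp\!\big(-\gamma[\alpha\mu(\rho)]^{-\gamma'}\big)$, exactly the bad term flagged in the introduction. Splitting the task -- enlargement carried by $v$ (or $w$), contraction by a $\mu$-free De Giorgi step on $u$ -- removes it. The residual technical work is the bookkeeping of the nested cylinders and intrinsic time-scales so that all inclusions needed for Lemmas~\ref{lem2.1}--\ref{lem2.4} and \ref{lem3.1}--\ref{lem3.2} are valid, and the handling of the exceptional case $\max_{Q_{8\rho,8\rho}(y,s)}a\geqslant 4A\mu(8\rho)(8\rho)^{q-p}$ through the sub-scale $\rho_{0}$, so that the final estimate remains expressed in terms of $\lambda(\rho)$.
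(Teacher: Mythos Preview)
You have misplaced the auxiliary-solution machinery. In the paper, the solutions $v$ and $w$ of Section~\ref{Sect3} are \emph{not} used to prove Theorem~\ref{th4.1}; they are used in Section~\ref{Sect5} to prove the Harnack inequality (Theorem~\ref{th1.2}), where Theorem~\ref{th4.1} is invoked as a black box after the comparison step. The paper's proof of Theorem~\ref{th4.1} is a direct DiBenedetto-type argument: apply Lemma~\ref{lem2.3} with level $N\lambda(\rho)e^{-\tau}$ to propagate the measure bound for all $\tau>0$, perform the exponential change of variables $h(z,\tau)=\dfrac{e^{\tau}}{N\lambda(\rho)}u(y+z\rho,\,s+\bar\delta_{0}\rho^{p}(N\lambda(\rho)e^{-\tau})^{2-p})$, use Lemma~\ref{lem2.1} plus the energy estimate to shrink the bad set (Lemma~\ref{lem4.1}), and finish with the second alternative of Lemma~\ref{lem2.4}. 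The key observation you correctly identify---that $a(x_{0},t_{0})=0$ forces $(N\lambda(\rho)/\rho)^{q-p}\max a\leqslant\gamma$, so condition~\eqref{eq2.7} holds and no power of $\mu$ is lost---is exactly what makes this direct route work; no comparison with auxiliary solutions is needed.

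This misplacement creates a genuine gap: your comparison step $u\geqslant v$ invokes the monotonicity hypothesis~\eqref{eq1.14}, but Theorem~\ref{th4.1} assumes only \eqref{eq1.2}--\eqref{eq1.4}, not \eqref{eq1.14}. (Condition~\eqref{eq1.14} enters only in Theorem~\ref{th1.2}.) So as written your argument does not prove the theorem as stated. Moreover, the auxiliary construction of Section~\ref{Sect3} is set up for $r\leqslant\tfrac12\rho$ with initial level $N\lambda(r)$, whereas you would need $r=\rho$; and the two-case split there according to the size of $\max a$ is calibrated for the passage from the small scale $r$ to the large scale $\rho$ in Section~\ref{Sect5}, not for the fixed-ratio expansion $B_{\rho}\to B_{2\rho}$ of Theorem~\ref{th4.1}. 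The ``standard expansion-of-positivity argument'' you sketch after the comparison---Lemma~\ref{lem2.3}, De~Giorgi--Poincar\'e, Lemma~\ref{lem2.4}---is in fact the whole of the paper's proof; the auxiliary-solution preamble is superfluous here and should be moved to where it actually belongs, namely the proof of Theorem~\ref{th1.2}.
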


\textbf{Proof of Theorem \ref{th4.1}}

We will suppose that inequality \eqref{eq4.2} is violated, i.e.
\begin{equation}\label{eq4.4}
C_{*}N \lambda(\rho) \geqslant \rho,
\end{equation}
where $C_{*}$ is a  positive number to be chosen later depending on the known data only. By our assumptions and by ($\varPhi_{\lambda}$)
condition
\begin{multline*}
\bigg(\frac{N\lambda(\rho)}{\rho}\bigg)^{p-2}\leqslant \psi^{+}_{Q_{\rho,\rho}(y, s)}\bigg(\frac{N \lambda(\rho)}{\rho}\bigg)\leqslant \\
\leqslant\psi^{+}_{Q_{2\rho,2\rho}(x_{0}, t_{0})}\bigg(\frac{N \lambda(\rho)}{\rho}\bigg)\leqslant  \gamma\bigg(\frac{N\lambda(\rho)}{\rho}\bigg)^{p-2},
\end{multline*}
therefore inequality \eqref{eq4.1} and Lemma \ref{lem2.3} with $r$ replaced by $\rho$, $N$ replaced by $ N \lambda(\rho) e^{-\tau}, \tau>0$ implies that
\begin{equation}\label{eq4.5}
\{B_{\rho}(y)\negthickspace:u(\cdot,s+\bar{\delta}_{0}\rho^{p}(N \lambda(\rho)e^{-\tau})^{2-p}\negthickspace\leqslant \negthickspace \varepsilon_{0}Ne^{-\tau}\}\leqslant\left(1-\frac{\alpha^{2}}{2}\right) B_{\rho}(y),
\end{equation}
for all $\tau>0$ and $\bar{\delta}_{0}=\gamma^{-1}\delta_{0}$, $\delta_{0}$ is the number defined in Lemma \ref{lem2.3}.

Following \cite{DiBGiVe1}, we introduce the change of variables and the new unknown function: \\$x=y+z\rho,t=s+\bar{\delta}_{0}\rho^{p}(N \lambda(\rho) e^{-\tau})^{2-p},\qquad h(z,\tau)=\dfrac{e^{\tau}}{N \lambda(\rho)}u(x,t)$.

Inequality \eqref{eq4.5} transforms into $h$ as
\begin{equation}\label{eq4.6}
\mid\{ B_{1} : h\leqslant \varepsilon_{0}\}\mid \leqslant \left(1-\frac{\alpha^{2}}{2}\right) \mid B_{1} \mid, \quad B_{1}:=B_{1}(0),
\end{equation}
for all $\tau >0$. Since $h>0$, the formal differentiation gives
\begin{equation}\label{eq4.7}
h_{\tau}= h +(p-2)\bar{\delta}_{0}\rho^{p}\bigg(\frac{e^{\tau}}{ N \lambda(\rho)}\bigg)^{p-1}u_{t} =\textrm{div}\mathbb{\bar{A}}(x, t, \nabla h) + h,
\end{equation}
where $\mathbb{\bar{A}}$ satisfies the inequalities
\begin{equation}\label{eq4.8}
\begin{aligned}
&\mathbb{\bar{A}}(x, t, \nabla h) \nabla h \geqslant (p-2) \bar{\delta}_{0}K_{1} \bigg(\mid \nabla h \mid^{p}+ \bar{a}(z,\tau) \mid \nabla h \mid^{q}\bigg),\\
&\mid \mathbb{\bar{A}}(x, t, \nabla h) \mid \leqslant (p-2) \bar{\delta}_{0}K_{2} \bigg(\mid \nabla h \mid^{p-1}+ \bar{a}(z,\tau) \mid \nabla h \mid^{q-1}\bigg),
\end{aligned}
\end{equation}
where~$\bar{a}(z,\tau)=\bigg(\dfrac{N \lambda(\rho)}{e^{\tau}\rho}\bigg)^{q-p} a(y+z\rho,s+\bar{\delta}_{0}\rho^{p}(N \lambda(\rho)e^{-\tau})^{2-p})~.$

\begin{lemma}\label{lem4.1}
{\it For every $\nu$ there exists $s_{*}>1$ depending only on the data, $\alpha$, $\bar{\delta}_{0}$ and $\nu$ such that
\begin{equation}\label{eq4.9}
\left| \left\{Q_{*} : h \leqslant \frac{\varepsilon_{0}}{2^{s_{*}}} \right\} \right| \leqslant \nu \mid Q_{*} \mid,
\end{equation}
where $Q_{*}:=B_{1}\times \bigg(\bigg(\dfrac{2^{s_{*}}}{\varepsilon_{0}}\bigg)^{p-2},2\bigg(\dfrac{2^{s_{*}}}{\varepsilon_{0}}\bigg)^{p-2}\bigg).$
}
\end{lemma}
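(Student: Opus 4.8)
The plan is to carry out a De Giorgi iteration over the geometric sequence of levels $k_j:=\varepsilon_0\,2^{-j}$, $j=0,1,\dots,s_*$, feeding the time–slice measure information \eqref{eq4.6} into the De Giorgi--Poincar\'e inequality of Lemma \ref{lem2.1}, and exploiting the fact that the length of the time interval of $Q_*$ is comparable to $k_{s_*}^{\,2-p}$ so that the level parameter $k_j$ will cancel in the iteration; the combinatorial factor $s_*$ that survives then drives $|\{Q_*:h\le k_{s_*}\}|$ below $\nu|Q_*|$ as soon as $s_*$ is taken large enough.

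First I would derive an energy estimate for $(h-k_j)_-$ on $Q_*$. Testing \eqref{eq4.7} (in its Steklov-averaged form) by $(h-k_j)_-\zeta^q$, with $\zeta(z,\tau)$ equal to $1$ on $B_1\times I_*$, where $I_*=(\tau_1,\tau_2)$, $\tau_1=(2^{s_*}/\varepsilon_0)^{p-2}$, $\tau_2=2\tau_1$, and vanishing on $\partial B_2\times(\tau_1/2,\tau_2)$ and at $\tau=\tau_1/2$ (so $|\nabla\zeta|\le\gamma$ and $|\zeta_\tau|\le\gamma/\tau_1$), one proceeds exactly as in Lemma \ref{lem2.2}; the only new term, $-\iint h\,(h-k_j)_-\zeta^q$, is nonpositive and is simply dropped. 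Using $(h-k_j)_-\le k_j\le\varepsilon_0$ and the uniform bound $\bar a(z,\tau)\le\gamma M^{q-p}$ — which follows from $a(x_0,t_0)=0$, the bound \eqref{eq1.3} on $a$ and the identity $[\lambda(\rho)]^{q-p}\mu(\rho)=1$ — so that the density controlling the $h$–equation satisfies $\bar\varPhi(z,\tau,v)\le\gamma v^p$ for $v\le\varepsilon_0$, the right-hand side of the energy inequality is at most $\gamma(k_j^2+k_j^p|I_*|)|B_1|$. Since $p>2$, $k_j\ge k_{s_*}$ for $j\le s_*$ and $|I_*|=\tau_1=k_{s_*}^{\,2-p}$, the first term is dominated by the second, and therefore
\[
\iint_{B_1\times I_*}|\nabla(h-k_j)_-|^p\,dz\,d\tau\ \le\ \gamma\,k_j^{\,p}\,|Q_*|.
\]

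Next, for a.e.\ $\tau\in I_*$ I apply Lemma \ref{lem2.1} on $B_1$ to $h(\cdot,\tau)$ with $k=k_{j+1}<l=k_j$. Because $k_j\le\varepsilon_0$, the set $\{h(\cdot,\tau)\ge k_j\}$ contains $\{h(\cdot,\tau)>\varepsilon_0\}$, whose measure is at least $\tfrac{\alpha^2}{2}|B_1|$ by \eqref{eq4.6}, while $k_j-k_{j+1}=k_j/2$. Setting $Y_j:=|\{Q_*:h<k_j\}|$, integrating the resulting inequality over $\tau\in I_*$ and applying H\"older's inequality in $(z,\tau)$ together with the energy bound above,
\[
\tfrac{\alpha^2}{4}\,k_j\,|B_1|\,Y_{j+1}\ \le\ \gamma\Bigl(\iint_{B_1\times I_*}|\nabla(h-k_j)_-|^p\Bigr)^{1/p}(Y_j-Y_{j+1})^{1-\frac1p}\ \le\ \gamma\,k_j\,|Q_*|^{1/p}(Y_j-Y_{j+1})^{1-\frac1p}.
\]
The factor $k_j$ cancels; raising to the power $p':=p/(p-1)$ gives $Y_{j+1}^{\,p'}\le\gamma\,|Q_*|^{1/(p-1)}(Y_j-Y_{j+1})$ with $\gamma$ depending only on the data and $\alpha$. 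Summing over $j=0,\dots,s_*-1$, the right-hand side telescopes to at most $\gamma|Q_*|^{1/(p-1)}Y_0\le\gamma|Q_*|^{p'}$, while the left-hand side is $\ge s_*\,Y_{s_*}^{\,p'}$ since $(Y_j)$ is non-increasing; hence $Y_{s_*}\le(\gamma/s_*)^{1-\frac1p}|Q_*|$, and choosing $s_*$ large enough, depending only on the data, $\alpha$, $\bar\delta_0$ and $\nu$, so that $(\gamma/s_*)^{1-\frac1p}\le\nu$, we obtain \eqref{eq4.9}.

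I expect the main obstacle to be the calibration in the energy step: one has to check that the intrinsic length $|I_*|\sim k_{s_*}^{\,2-p}$ of the time interval of $Q_*$ is precisely what makes the gradient estimate read $\gamma k_j^{\,p}|Q_*|$ — this is what allows the level $k_j$ to cancel against the one produced by $k_j-k_{j+1}$ in the Poincar\'e inequality, leaving only the factor $s_*$ after telescoping — and that the transformed coefficient $\bar a$ is bounded independently of $\rho$, which is where $a(x_0,t_0)=0$ and the choice $\lambda(\rho)=[\mu(\rho)]^{-1/(q-p)}$ enter. The remaining computations are a routine De Giorgi argument.
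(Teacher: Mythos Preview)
Your argument is correct and follows essentially the same route as the paper's proof: apply Lemma~\ref{lem2.1} time-slice by time-slice using \eqref{eq4.6}, integrate in $\tau$ over $I_*$, combine with a Caccioppoli estimate for $(h-k_j)_-$ that reads $\iint|\nabla(h-k_j)_-|^p\le\gamma k_j^p|Q_*|$, and telescope to extract the factor $s_*^{-(p-1)/p}$. The only point where the paper is slightly more explicit is in justifying the bound on $\bar a$: it invokes the still-free constant $C_*$ from \eqref{eq4.4}, choosing $C_*\ge e^{4(2^{s_*}/\varepsilon_0)^{p-2}}$ so that the original time $t=s+\bar\delta_0\rho^p(N\lambda(\rho)e^{-\tau})^{2-p}$ stays within a $\rho$-neighbourhood of $t_0$ for all $\tau\in\bar Q_*$, which is what allows one to apply \eqref{eq1.3} with $a(x_0,t_0)=0$; you correctly flag this as the delicate step but should make the role of $C_*$ explicit.
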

\begin{proof}Using Lemma \ref{lem2.1} with $k=k_{s+1}$, $l=k_{s}$, $k_{s}= \dfrac{\varepsilon_{0}}{2^{s}}$, due to \eqref{eq4.6} we
obtain for every $1\leqslant s \leqslant s_{*}-1$
\begin{equation*}
(k_{s}-k_{s+1})\mid A_{s+1}(\tau) \mid \leqslant \gamma \alpha^{2} \negthickspace\negthickspace \int\limits_{A_{s}(\tau)\setminus A_{s+1}(\tau)}\negthickspace \negthickspace\negthickspace\mid\nabla  h\mid dz,\,\,  A_{s}(\tau):=\{B_{1}: h\leqslant k_{s}\},
\end{equation*}
for all $\tau >0$. Integrating this inequality with respect to $\tau \negthickspace\in\negthickspace (  k_{s_{*}}^{2-p},2 k_{s_{*}}^{2-p})$ and using the H\"{o}lder inequality, we have
\begin{equation}\label{eq4.10}
(k_{s}-k_{s+1})^{\frac{p}{p-1}}\mid A_{s+1} \mid^{\frac{p}{p-1}} \leqslant \gamma(\alpha) \left(\iint\limits_{A_{s}}\mid\nabla h \mid^{p}\right)^{\frac{1}{p-1}} \negthickspace\mid A_{s}\setminus A_{s+1} \mid,
\end{equation}
where $A_{s}:=\int\limits_{k_{s_{*}}^{2-p}}^{2 k_{s_{*}}^{2-p}}A_{s}(\tau)d\tau.$
To estimate the first term on the right-hand side of \eqref{eq4.10} we use Lemma \ref{lem2.2} with $k=k_{s}, \zeta \in C_{0}^{\infty}(\bar{Q}_{*})$, $\bar{Q}_{*}=B_{2}\times(\frac{1}{2}k_{s_{*}}^{2-p},4k_{s_{*}}^{2-p})$, $0\leqslant\zeta\leqslant 1$, $\zeta=1$ in $Q_{*}$, $\mid \nabla \zeta \mid \leqslant 2$, $ \mid \zeta_{\tau} \mid \leqslant 2 k_{s_{*}}^{p-2}$. Due to \eqref{eq4.8} we have
\begin{multline*}
\iint\limits_{A_{s}}\mid \nabla h \mid^{p} dx d\tau \leqslant \gamma \iint\limits_{\bar{Q}_{*}} (h-k_{s})^{2}_{-} \mid \zeta_{\tau} \mid dx d\tau + \\
+\gamma \iint\limits_{\bar{Q}_{*}} (h-k_{s})^{p}_{-} \mid \nabla \zeta \mid^{p} dx d\tau + \gamma \iint\limits_{\bar{Q}_{*}}\bar{a}(z,\tau) (h-k_{s})^{q}_{-} \mid \nabla \zeta \mid^{q} dx d\tau \leqslant\\
\leqslant \gamma k_{s}^{p} \bigg(1+ k_{s}^{q-p}\max\limits_{\bar{Q}_{*}}\bar{a}(z,\tau) \bigg) \mid Q_{*} \mid.
\end{multline*}
If $ C_{*} \geqslant e^{4(\frac{2^{s_{*}}}{\varepsilon_{0}})^{p-2}}$, then by \eqref{eq4.1} $\bar{\delta}_{0}\rho^{p}(N \lambda(\rho) e^{-\tau})^{2-p}\leqslant \rho $, and therefore by condition ($\varPhi_{\lambda}$)
\begin{multline*}
k_{s}^{q-p}\max\limits_{\bar{Q}_{*}}\bar{a}(z,\tau) \leqslant \bigg(\frac{M\varepsilon_{0}\lambda(\rho)}{\rho}\bigg)^{q-p} \max\limits_{Q_{2\rho, 2\rho}(x_{0},t_{0})} a(x,t) \leqslant \\
\leqslant A\bigg(\frac{M\varepsilon_{0}\lambda(\rho)}{\rho}\bigg)^{q-p}\mu(2\rho)(2\rho)^{q-p} \leqslant \gamma \lambda(\rho)^{q-p} \mu(\rho) = \gamma,
\end{multline*}
so
\begin{equation}\label{eq4.11}
\iint\limits_{A_{s}}\mid \nabla h \mid^{p} dx d\tau \leqslant \gamma k_{s}^{p} \mid Q_{*} \mid.
\end{equation}
Combining estimates \eqref{eq4.10} and \eqref{eq4.11}, we obtain
\begin{equation*}
\mid A_{s+1} \mid^{\frac{p}{p-1}} \leqslant \mid Q_{*} \mid ^{\frac{1}{p-1}} \gamma \mid A_{s}\setminus A_{s+1} \mid.
\end{equation*}
Summing up this inequality for $1\leqslant s\leqslant s_{*}-1 $, we conclude that
\begin{equation*}
\mid A_{s_{*}} \mid \leqslant \gamma (s_{*}-1)^{-\frac{p-1}{p}} \mid Q_{*} \mid,
\end{equation*}
choosing $s_{*}$ from the condition $\gamma (s_{*}-1)^{-\frac{p-1}{p}} \leqslant \nu$ we arrive at the required \eqref{eq4.9}, which completes the proof of the lemma.
\end{proof}
Using the fact that $k^{q-p}_{s_{*}} \max\limits_{\bar{Q}_{*}}\bar{a}(z,\tau) \leqslant \gamma$, by Lemma \ref{lem2.4} from \eqref{eq4.9} we obtain that
$$ h(x,\tau) \geqslant \frac{\varepsilon_{0}}{2^{s_{*}+1}},\quad (x,\tau) \in B_{\frac{1}{2}}\times \bigg(\frac{5}{4}\bigg(\frac{2^{s_{*}}}{\varepsilon_{0}}\bigg)^{p-2},\frac{7}{4}\bigg(\frac{2^{s_{*}}}{\varepsilon_{0}}\bigg)^{p-2}\bigg).$$
This inequality can  be rewritten in terms of function $u$ as follows
$$u(x,t)\geqslant \varepsilon_{0}e^{-2(\frac{2^{s_{*}}}{\varepsilon_{0}})^{p-2}}2^{-s_{*}-1} N \lambda(\rho) ,$$
for all $(x,t) \in B_{\frac{\rho}{2}}(y)\times\bigg( s+\bar{\delta}_{0}e^{\frac{5}{4}(\frac{2^{s_{*}}}{\varepsilon_{0}})^{p-2}}\rho^{p} \big(N\lambda(\rho)\big)^{2-p} ,s+\bar{\delta}_{0}e^{\frac{7}{4}(\frac{2^{s_{*}}}{\varepsilon_{0}})^{p-2}}\rho^{p} \\ \big(N\lambda(\rho)\big)^{2-p}\bigg).$\\
This proves Theorem \ref{th4.1} with $\bar{C}_{1}=\bar{\delta}_{0}e^{\frac{5}{4}(\frac{2^{s_{*}}}{\varepsilon_{0}})^{p-2}}$ and
$\bar{C}_{2}=\bar{\delta}_{0}e^{\frac{7}{4}(\frac{2^{s_{*}}}{\varepsilon_{0}})^{p-2}}$.

\section{Harnack's inequality, proof of Theorem \ref{th1.2}}\label{Sect5}
Fix $(x_{0},t_{0})\in \Omega_{T} $ such that $a(x_{0},t_{0})=0$ and for $\tau \in(0,1)$ construct the cylinder $Q_{\tau}:=B_{\rho}(x_{0})\times (t_{0}- (\tau\rho)^{p}\big(u_{0} \lambda_{1}(\rho)\big)^{2-p}, t_{0})$, $u_{0}:=u(x_{0}, t_{0})$. Following Krylov and Safonov, we consider the equation
$$ M_{\tau}= N_{\tau},\quad M_{\tau}:= \sup\limits_{Q_{\tau}}u, \quad N_{\tau}:= \frac{1}{2} u_{0} (1-\tau)^{-n} \frac{\lambda_{1}(\rho)}{\lambda_{1}\big((1-\tau)\rho\big)},$$
$$\lambda_{1}(\rho)=\lambda(\rho)[\mu(\rho)]^{-n}.$$
Let $\tau_{0}$ be the maximal root of the above equation and $u(y,s)= N_{\tau_{0}}$. Let $r=\dfrac{1-\tau_{0}}{2}\rho$ and set $\theta=\dfrac{r^{2}}{\psi^{+}_{Q_{4r, 4r}(y,s)}\bigg(\dfrac{N_{\tau_{0}}}{r}\bigg)}$, since
$$\psi^{+}_{Q_{4r, 4r}(y,s)}\bigg(\dfrac{N_{\tau_{0}}}{r}\bigg) \geqslant \bigg(\frac{N_{\tau_{0}}}{r}\bigg)^{p-2}\geqslant \big(u_{0}\lambda_{1}(\rho)\big)^{p-2},$$
we have an inclusion $Q_{r,\theta}(y,s)\subset Q_{\frac{1+\tau_{0}}{2}}$, so by \eqref{eq1.13} there holds
$$\sup\limits_{Q_{r,\theta}(y,s)} u \leqslant 2^{n} u_{0} (1-\tau_{0})^{-n} \frac{\lambda_{1}(\rho)}{\lambda_{1}\big(\frac{1-\tau_{0}}{2}\rho)}=2^{n} N_{\tau_{0}} \frac{\lambda_{1}(2r)}{\lambda_{1}(r)}\leqslant 2^{n+b_{1}} N_{\tau_{0}}.$$

Further we will assume that inequality \eqref{eq1.15} is violated, i.e.
\begin{equation}\label{eq5.1}
u_{0}\geqslant C \frac{\rho}{\lambda_{1}(\rho)},
\end{equation}
with some $C >0$ to be determined later depending only on the data.

Claim $1$. There exists number $\nu >0$ depending only on the data such that
$$\left| \left\{Q^{-}_{r,\theta}(y,s): u \geqslant \frac{N_{\tau_{0}}}{2} \right\}\right| \geqslant \nu [\mu(r)]^{-n} \mid Q^{-}_{r,\theta}(y,s)\mid.$$
Indeed, if not, we apply Lemma \ref{lem2.4} for the function $2^{n+b_{1}}N_{\tau_{0}} -u$ with the choices
$$N= (2^{n+b_{1}}-\frac{1}{2})N_{\tau_{0}},\quad \xi_{0}=\dfrac{2^{n+b_{1}}-\frac{3}{4}}{2^{n+b_{1}}-\frac{1}{2}},\quad \nu=\gamma^{-1} (1-\xi_{0})^{2+nq},$$

\noindent condition \eqref{eq5.1} implies that $\theta \leqslant r$, therefore we conclude that $u(y,s)\leqslant \frac{3}{4}N_{\tau_{0}}$, reaching a contradiction, which proves the claim.

Claim $2$. There exists time level $\bar{s}\in (s-(1-\dfrac{\nu}{2}[\mu(r)]^{-n})\theta,s)$ such that
\begin{equation}\label{eq5.2}
\left| \left\{ B_{r}(y) : u(\cdot,\bar{s}) \geqslant \frac{N_{\tau_{0}}}{2}\lambda(r) \right\} \right| \geqslant \frac{\nu[\mu(r)]^{-n}}{2-\nu[\mu(r)]^{-n}}  \mid B_{r}(y) \mid.
\end{equation}
If not and if inequality \eqref{eq5.2} is violated for all $t\in (s-(1-\dfrac{\nu}{2}[\mu(r)]^{-n})\theta,s)$, then by Claim $1$ we have
\begin{multline*}
(1-\nu [\mu(r)]^{-n}) \mid Q^{-}_{r,\theta}(y,s) \mid <\negthickspace\left| \left\{ Q^{-}_{r,(1-\frac{\nu}{2}[\mu(r)]^{-n})\theta}(y,s): u\negthickspace\leqslant\negthickspace\frac{N_{\tau_{0}}}{2}\lambda(r) \right\}  \right| \negthickspace \leqslant \\ \leqslant \left|\left\{Q^{-}_{r,\theta}(y,s)\negthickspace:u \leqslant \frac{N_{\tau_{0}}}{2} \right\}\right| \leqslant (1-\nu [\mu(r)]^{-n}) \mid Q^{-}_{r,\theta}(y,s)\mid,
\end{multline*}
and reach a contradiction. This proves inequality \eqref{eq5.2}.

First we assume  that
$$\max\limits_{Q_{4r,4r}(y,\bar{s})} a(x,t)\leqslant 4A \mu(4r) (4r)^{q-p}$$
and construct the solution $v(x,t)=v_{r,\frac{1}{2}N_{\tau_{0}}}(x,t,y,\bar{s})$ with
$$N=\frac{N_{\tau_{0}}}{2}\quad \text{and} \quad E=E(\bar{s}):=\left\{ B_{r}(y) : u(\cdot,\bar{s}) \geqslant \frac{N_{\tau_{0}}}{2}\lambda(r)\right\}$$
 of the problem \eqref{eq3.2}-\eqref{eq3.4} in $Q_{1}=Q^{+}_{8\rho,8\tau_{1}}(y,\bar{s})$ where
$\tau_{1}=\rho^{p}\bigg(\dfrac{N_{\tau_{0}}}{2}\lambda(r)\\ \dfrac{\mid E(\bar{s}) \mid}{\rho^{n}}\bigg)^{2-p}$.
Inequality \eqref{eq3.15} of Lemma \ref{lem3.2} yields
\begin{equation*}
\left|\left\{B_{4\rho}(y): v(\cdot,t_{1}) \leqslant \varepsilon_{1}N_{\tau_{0}}\lambda(r) \frac{\mid E(\bar{s})\mid}{\rho^{n}}\right\}\right| \leqslant (1-\alpha_{1}) \big| B_{4\rho}(y) \big|,
\end{equation*}
for some time level $t_{1}\in (\bar{s} +\delta_{1}\tau_{1},\bar{s}+ \tau_{1})$ and the numbers $\varepsilon_{1},\alpha_{1},\delta_{1} \in (0,1)$ depended only on the data.

By \eqref{eq5.2} $|E(\bar{s})| \geqslant \dfrac{\nu}{2}[\mu(r)]^{-n} |B_{r}(y)|$, therefore, since $u\geqslant v$ on the parabolic boundary of $Q_{1}$, by the monotonicity condition \eqref{eq1.14}  we obtain
\begin{multline}\label{eq5.3}
\left|\left\{B_{4\rho}(y) :u(\cdot,t_{1})\leqslant  \varepsilon_{1}\frac{\nu}{2}N_{\tau_{0}}\lambda(r)[\mu(r)]^{-n}\bigg(\frac{r}{\rho}\bigg)^{n} \right\}\right| \leqslant\\
 \leqslant\left|\left\{B_{4\rho}(y) :u(\cdot,t_{1})\leqslant  \varepsilon_{1}N_{\tau_{0}}\lambda(r)\frac{\mid E(\bar{s})\mid}{\rho^{n}} \right\}\right|\leqslant \\ \leqslant \left|\left\{B_{4\rho}(y) :v(\cdot,t_{1})\leqslant  \varepsilon_{1}N_{\tau_{0}}\lambda(r)\frac{\mid E(\bar{s})\mid}{\rho^{n}} \right\}\right| \leqslant (1-\alpha_{1}) \big| B_{4\rho}(y)\big|,
\end{multline}
for some time level $t_{1}\in (\bar{s} +\delta_{1}\tau_{1},\bar{s}+ \tau_{1})$.

From \eqref{eq5.3} by Theorem \ref{th4.1} with $N=\varepsilon_{1}\dfrac{\nu}{2}N_{\tau_{0}} [\mu(r)]^{-n}\bigg(\dfrac{r}{\rho}\bigg)^{n}$
we have
\begin{equation*}
u(x,t) \geqslant N_{1}:=\sigma_{0}\varepsilon_{1}\frac{\nu}{2}N_{\tau_{0}}\lambda(r)[\mu(r)]^{-n}\bigg(\frac{r}{\rho}\bigg)^{n}, \quad x \in B_{2\rho}(y),
\end{equation*}
for all $t\in(t_{1}+\bar{C}_{1}\rho^{p}N_{1}^{2-p},t_{1}+\bar{C}_{2}\rho^{p}N_{1}^{2-p})$, provided that
$\sigma_{0} N_{1}\geqslant \rho$.\\Since $B_{\rho}(x_{0}) \subset B_{2\rho}(y)$, recalling the definition of $N_{\tau_{0}}$, $N_{1}$ and $r$,  using \eqref{eq1.13} and using the fact that $\lambda_{1}(\rho)= \lambda(\rho)[\mu(\rho)]^{-n}$,  from this we obtain
\begin{equation}\label{eq5.4}
u(x,t)\geqslant \frac{\sigma_{0}}{2^{n+2}}\varepsilon_{1}\nu u_{0} \lambda_{1}(\rho),\quad x\in B_{\rho}(x_{0}),
\end{equation}
for all $t\negthickspace\in \negthickspace(t_{1}\negthickspace +\bar{C}_{1}\rho^{p}N_{1}^{2-p}\negthickspace, t_{1}+\bar{C}_{2}\rho^{p}N_{1}^{2-p})$, provided that $\dfrac{\sigma_{0}}{2^{n+2}}\varepsilon_{1}\nu u_{0} \lambda_{1}(\rho)\negthickspace\geqslant \rho$, which holds by \eqref{eq5.1} if $C$ is chosen to satisfy $C \geqslant 2^{n+2}\sigma_{0}^{-1}\varepsilon_{1}^{-1}\nu^{-1}$.\\
By our choices
$t_{1}\geqslant \bar{s} +\delta_{1}\tau_{1}\geqslant s-\theta\geqslant t_{0}-\rho^{p}(u_{0}\lambda_{1}(\rho))^{2-p}-\theta\quad \text{and}\quad  t_{1}\leqslant \bar{s} +\tau_{1} \leqslant s+\tau_{1}\leqslant t_{0}+ \tau_{1},$
moreover, $\tau_{1}=\rho^{p}\bigg(\dfrac{N_{\tau_{0}}}{2}\lambda(r)\dfrac{\mid E(\bar{s}) \mid}{\rho^{n}}\bigg)^{2-p} \leqslant
\rho^{p}\bigg(\nu\dfrac{N_{\tau_{0}}}{2}\lambda(r)\bigg(\dfrac{r}{\rho}\bigg)^{n}\bigg)^{2-p}
= \rho^{p}\bigg(\dfrac{\nu}{2^{n+2}} u_{0}\lambda_{1}(\rho)\bigg)^{2-p},$
$\theta \leqslant r^{p}N_{\tau_{0}}^{2-p}\leqslant r^{p}\times\\ \times \bigg(\dfrac{u_{0}}{2^{n+1}}\dfrac{\lambda_{1}(\rho)}{\lambda_{1}(r)}\bigg(\dfrac{\rho}{r}\bigg)^{n}\bigg)^{2-p} \leqslant \rho^{p}\bigg(\dfrac{1}{2^{n+1}} u_{0}\lambda_{1}(\rho)\bigg)^{2-p}.$

Therefore, setting $c=\bar{C}_{1}(\sigma_{0}\varepsilon_{1}\nu 2^{-n-2})^{2-p}+\big(\dfrac{\nu}{2^{n+2}}\big)^{2-p}$ and $c_{1}=\bar{C}_{2}(\sigma_{0}\varepsilon_{1}\nu 2^{-n-2})^{2-p}-1-2^{(n+2)(p-2)}$, we obtain that inequality \eqref{eq5.4} holds  for $t_{0}+c \rho^{p}(u_{0}\lambda_{1}(\rho))^{2-p} \leqslant t \leqslant c_{1}\rho^{p}(u_{0}\lambda_{1}(\rho))^{2-p}$, provided that \eqref{eq5.1} is valid and $C \geqslant 2^{n+2}\sigma_{0}^{-1}\varepsilon_{1}^{-1}\nu^{-1}$,  which proves Theorem \ref{th1.2} in the case $\max\limits_{Q_{4r, 4r}(y,\bar{s})} a(x,t)\leqslant 4A \mu(4r) (4r)^{q-p}$.

Now let $\max\limits_{Q_{4r, 4r}(y,\bar{s})} a(x,t)\geqslant 4A \mu(4r) (4r)^{q-p}$ , then there exists $\bar{\rho} \in (r,\rho)$ such that
$\max\limits_{Q_{4\bar{\rho},4\bar{\rho}}(y,\bar{s})} a(x,t)\geqslant 4A \mu(4\bar{\rho}) (4\bar{\rho})^{q-p}$  and $\max\limits_{Q_{8\bar{\rho},8\bar{\rho}}(y,\bar{s})} a(x,t)\negthickspace \leqslant\negthickspace 4A \mu(8\bar{\rho}) (8\bar{\rho})^{q-p} $,
and let $\rho_{0}$ be the maximal number satisfying the above condition. Consider the solution $w(x,t)=w_{r,\frac{1}{2}N_{\tau_{0}}}(x,t,y,\bar{s})$ with $N=\frac{1}{2}N_{\tau_{0}}$, $E=E(\bar{s})$ of the problem \eqref{eq3.6}-\eqref{eq3.8} in $Q_{2}=Q^{+}_{8\rho_{0},8\tau_{2}}(y,\bar{s})$,  $\tau_{2}\negthickspace=\negthickspace\rho_{0}^{p}\bigg(\dfrac{N_{\tau_{0}}}{2}\lambda(r)\dfrac{\mid E(\bar{s}) \mid}{\rho_{0}^{n}}\bigg)^{2-p},$ $E= E(\bar{s})\negthickspace:=\negthickspace\{ B_{r}(y): u(\cdot,\bar{s}) \geqslant \frac{1}{2} N_{\tau_{0}}\lambda(r)\}.$ Inequality \eqref{eq3.16} of Lemma \ref{lem3.2} implies
\begin{equation*}
\left|\left\{B_{4\rho_{0}}(y) :w(\cdot,t_{1}) \leqslant \varepsilon_{1}N_{\tau_{0}}\lambda(r) \frac{\mid E(\bar{s})\mid}{\rho^{n}}\right\}\right| \leqslant
(1-\alpha_{1}) \mid B_{4\rho_{0}}(y) \mid,
\end{equation*}
for some time level $t_{1}\in (\bar{s} +\delta_{2}\tau_{2},\bar{s}+ \tau_{2})$ and the numbers $\varepsilon_{1},\alpha_{1},\delta_{1} \in (0,1)$ depend only on the data.

Similarly to \eqref{eq5.3} by the fact that $u\geqslant w$ on the parabolic boundary of $Q_{2}$ we have
\begin{equation}\label{eq5.5}
\begin{aligned}
\left|\left\{B_{4\rho_{0}}(y) :u(\cdot,t_{1})\leqslant  \varepsilon_{1}\frac{\nu}{2}N_{\tau_{0}}\lambda(r)[\mu(r)]^{-n}\bigg(\frac{r}{\rho_{0}}\bigg)^{n} \right\}\right|\leqslant\\ \leqslant \left|\left\{B_{4\rho_{0}}(y) : u(\cdot,t_{1})\leqslant  \varepsilon_{1}N_{\tau_{0}}\lambda(r)\frac{\mid E(\bar{s})\mid}{\rho_{0}^{n}} \right\}\right| \leqslant \\
\leqslant\negthickspace \left|\left\{B_{4\rho_{0}}(y): w(\cdot,t_{1})\negthickspace\leqslant  \negthickspace \varepsilon_{1}N_{\tau_{0}}\lambda(r)\frac{\mid E(\bar{s})\mid}{\rho_{0}^{n}} \right\}\right|\negthickspace\leqslant\negthickspace(1-\alpha_{1}) \mid B_{4\rho_{0}}(y) \mid,
\end{aligned}
\end{equation}
for some time level $t_{1}\in (\bar{s} +\delta_{2}\tau_{2},\bar{s}+ \tau_{2})$, which by Theorem \ref{th4.1} with $N=\varepsilon_{1}\frac{\nu}{2}N_{\tau_{0}} [\mu(r)]^{-n}\bigg(\dfrac{r}{\rho_{0}}\bigg)^{n}$ implies
\begin{equation}\label{eq5.6}
u(x,t) \geqslant \bar{N}_{1}:=\sigma_{0}\varepsilon_{1}\frac{\nu}{2}N_{\tau_{0}}\lambda(r)[\mu(r)]^{-n}\bigg(\frac{r}{\rho_{0}}\bigg)^{n}, \quad x \in B_{2\rho_{0}}(y),
\end{equation}
for all $t\in(t_{1}+\bar{C}_{1}\rho_{0}^{p}\bar{N}_{1}^{2-p},t_{1}+\bar{C}_{2}\rho_{0}^{p}\bar{N}_{1}^{2-p})$, provided that
$\sigma_{0} \bar{N}_{1}\geqslant \rho_{0}$. We note that this inequality holds if $C\geqslant 2^{n+1}\sigma^{-1}_{0}\varepsilon^{-1}_{1}\nu^{-1}.$

Construct the solution $v=v_{2\rho_{0},\bar{N}_{1}}(x,t,y,t_{2})$ with $N=\sigma_{0}\varepsilon_{1}\frac{\nu}{2}N_{\tau_{0}}$ $[\mu(r)]^{-n}\bigg(\dfrac{r}{\rho_{0}}\bigg)^{n}$ and $E=B_{2\rho_{0}}(y)$ of the problem \eqref{eq3.2}-\eqref{eq3.4} in $\bar{Q}_{1}=Q^{+}_{8\rho,8\bar{\tau}_{1}}(y,t_{2})$, $t_{2}=t_{1}+\bar{C}_{1}\rho_{0}^{p}\bar{N}_{1}^{2-p}$  and  $\bar{\tau}_{1}=\rho^{p}(\bar{N}_{1}(\frac{2\rho_{0}}{\rho})^{n})^{2-p}$. Inequality \eqref{eq3.15} implies
\begin{equation*}
\left|\left\{B_{4\rho}(y) :v(\cdot,t_{3}) \leqslant \varepsilon_{1}\bar{N}_{1} \bigg(\frac{2\rho_{0}}{\rho}\bigg)^{n} \right\} \right|\leqslant
(1-\alpha_{1}) \mid B_{4\rho}(y) \mid,
\end{equation*}
for some time level $t_{3}\in (t_{2} +\delta_{1}\bar{\tau}_{1},t_{2}+ \bar{\tau}_{1})$ with some $\varepsilon_{1},\delta_{1},\alpha_{1} \in (0,1)$ depending only upon the data.

From this, completely similar to \eqref{eq5.3}, \eqref{eq5.4}, by the fact that $u\geqslant v$ on the parabolic boundary of $\bar{Q}_{1}$
and using Theorem \ref{th4.1} we arrive at
\begin{equation*}
u(x,t) \geqslant N_{2}:=\sigma_{0}\varepsilon_{1}\bar{N}_{1}\bigg(\frac{2\rho_{0}}{\rho}\bigg)^{n},\quad x \in B_{2\rho}(y),
\end{equation*}
for all $t\in(t_{3}+\bar{C}_{1}\rho^{p}N_{2}^{2-p},t_{3}+\bar{C}_{2}\rho^{p}N_{2}^{2-p})$, provided that
$\sigma_{0} N_{2}\geqslant \rho$.\\Since $B_{\rho}(x_{0}) \subset B_{2\rho}(y)$, recalling the definition of $N_{0},\bar{N}_{1}$ and $r$, from this, we obtain
\begin{equation}\label{eq5.7}
u(x,t)\geqslant \frac{\sigma^{2}_{0}}{2^{n+2}}\varepsilon^{2}_{1}\nu u_{0} \lambda_{1}(\rho),\quad  x\in B_{\rho}(x_{0}),
\end{equation}
for all $t\in (t_{3}+\bar{C}_{1}\rho^{p}N_{2}^{2-p},t_{3}+\bar{C}_{2}\rho^{p}N_{2}^{2-p})$, provided that $\frac{\sigma^{2}_{0}}{2^{n+2}}\varepsilon^{2}_{1}\nu u_{0} \lambda_{1}(\rho)$ $\geqslant \rho$, which holds by \eqref{eq5.1} if $C$ is chosen to satisfy $C\geqslant 2^{n+2}\sigma_{0}^{-2}\varepsilon_{1}^{-2}\nu^{-1}$.  By our choices
\begin{multline*}
t_{3}\leqslant \bar{s} +\tau_{2}+\bar{\tau}_{1} +
\bar{C}_{1}\rho_{0}^{p} N_{1}^{2-p}\leqslant
t_{0}+\rho_{0}^{p}\left(\frac{N_{\tau_{0}}}{2}\lambda(r) \frac{|E(\bar{s})|}{\rho_{0}^{n}}\right)^{2-p}+\\
+\rho^{p}\left(\sigma_{0}\varepsilon_{1}\frac{\nu}{2} N_{\tau_{0}} \lambda_{1}(r)\left(\frac{r}{\rho_{0}}\right)^{n}\right)^{2-p}\negthickspace\negthickspace+
\bar{C}_{1}\rho_{0}^{p}\left(\sigma_{0}\varepsilon_{1}\frac{\nu}{2} N_{\tau_{0}} \lambda_{1}(r)\left(\frac{2r}{\rho}\right)^{n}\right)^{2-p}\negthickspace\negthickspace\leqslant\\
 \leqslant\negthickspace t_{0}+ \rho^{p}(u_{0}\lambda_{1}(\rho))^{2-p}\left[2^{(n+2)(p-2)}+\left(\sigma_{0}\varepsilon_{1}\frac{\nu}{2^{n+2}}\right)^{2-p}\negthickspace\negthickspace
+\bar{C}_{1}\left(\sigma_{0}\varepsilon_{1}\frac{\nu}{2}\right)^{2-p}\right]
\end{multline*}
 and
$t_{3}\geqslant t_{0}-\rho^{p}(u_{0}\lambda_{1}(\rho))^{2-p}-\theta \geqslant t_{0}- \rho^{p}(u_{0}\lambda_{1}(\rho))^{2-p}(1+ 2^{(n+1)(p-2)}). $

Therefore, setting $c=2^{(n+2)(p-2)} +(\sigma_{0}\varepsilon_{1}\frac{\nu}{2^{n+2}})^{2-p} +\bar{C}_{1}(\sigma_{0}\varepsilon_{1}\frac{\nu}{2})^{2-p} +\bar{C}_{1}(\sigma_{0}^{2}\varepsilon_{1}^{2}\frac{\nu}{2^{n+2}})^{2-p}$ and $c_{1}= \bar{C}_{2}(\sigma_{0}^{2}\varepsilon_{1}^{2}\frac{\nu}{2^{n+2}})^{2-p} -1 -2^{(n+2)(p-2)}$, we obtain that inequality \eqref{eq5.7} holds for all $t_{0}+c\rho^{p}(u_{0}\lambda_{1}(\rho))^{2-p} \leqslant t \leqslant t_{0}+c_{1}\rho^{p}\times\\\times(u_{0}\lambda_{1}(\rho))^{2-p}$, provided that \eqref{eq5.1} is valid and $C \geqslant 2^{n+2}\sigma_{0}^{-2}\varepsilon_{1}^{-2}\nu^{-1}$.  This completes the proof of Theorem \ref{th1.2}.

\vskip3.5mm
\section*{ Acknowledgements} This work is supported by the Grant EFDS-FL2-08 of the found The European Federation of Academies of Sciences and Humanities (ALLEA) and by the Volkswagen Foundation project
”From Modeling and Analysis to Approximation”.

\bigskip

CONTACT INFORMATION

\medskip
Mariia O. Savchenko\\Institute of Applied Mathematics and Mechanics,
National Academy of Sciences of Ukraine, Gen. Batiouk Str. 19, 84116 Sloviansk, Ukraine\\shan$\_$maria@ukr.net

\medskip
Igor I. Skrypnik\\Institute of Applied Mathematics and Mechanics,
National Academy of Sciences of Ukraine, Gen. Batiouk Str. 19, 84116 Sloviansk, Ukraine\\ihor.skrypnik@gmail.com

\medskip
Yevgeniia A. Yevgenieva\\
Institute of Applied Mathematics and Mechanics, National Academy of Sciences of Ukraine, Gen.
Batiouk Str. 19, 84116 Sloviansk, Ukraine\\
yevgeniia.yevgenieva@gmail.com
\end{document}